\def\p{\partial}
\def\ul{\underline}
\newtheorem{defi}{Definition} 
\newtheorem{lem}{Lemma}
\newcounter{subdefi}
\newcommand{\maxN}{4}
\begin{document}

\markboth{D. Hilditch and R. Richter}
{Hyperbolicity of high order systems}

%
\catchline{}{}{}{}{}
%

\title{HYPERBOLICITY OF HIGH ORDER SYSTEMS OF EVOLUTION EQUATIONS}

\author{DAVID HILDITCH}

\address{Theoretical Physics Institute, University of 
Jena,\\ 07743 Jena, Germany\\
\email{david.hilditch@uni-jena.de}
}

\author{RONNY RICHTER}

\address{Mathematisches Institut, Universi\"at T\"ubingen,\\ 
72076 T\"ubingen, Germany
}

\maketitle


\begin{abstract}
{\bfseries Abstract.}\quad We study properties of evolution equations 
which are first order in time and arbitrary order in space (FT$N$S).
Following Gundlach and Mart\'in-Garc\'ia~($2006$) we define 
strong and symmetric hyperbolicity for FT$N$S systems
and examine the relationship between these definitions, 
and the analogous concepts for first order systems. We 
demonstrate equivalence of the FT$N$S definition of strong 
hyperbolicity with the existence of a strongly hyperbolic 
first order reduction. We also demonstrate equivalence
of the FTNS definition, up to $N=4$, of symmetric hyperbolicity 
with the existence of a symmetric hyperbolic first order 
reduction.
\end{abstract}

\keywords{High-order PDEs; Strong hyperbolicity; Symmetric 
hyperbolicity}


\section{Introduction}
\label{section:Introduction}

Systems of partial differential equations admitting wave-like 
solutions are ubiquitous in both physics and applied mathematics.
With additional smoothness assumptions, it is known that by 
restricting to the special case with at most first order derivatives 
the initial value problem of such systems can be classified 
algebraically with respect to its well-posedness. The crucial step 
in this classification is to check for strong hyperbolicity by 
analyzing the principal part, i.e. the derivative terms, of the 
evolution system~\cite{GusKreOli95,KreLor89}. 

The theory used to demonstrate this relies on pseudo-differential 
calculus~\cite{Tay81}. By performing a pseudo-differential 
reduction to first order the basic method can also be applied
to evolution systems with higher order derivatives, see for 
example~\cite{NagOrtReu04,GunGar05}.

For the initial boundary value problem the theory is not so 
complete. The simplest approach for first order systems is to check 
for a stronger condition, called symmetric hyperbolicity. With 
carefully chosen boundary conditions it can be used to identify a 
well-posed initial boundary value problem~\cite{GusKreOli95,KreLor89}.
If the evolution system is not symmetric hyperbolic there is still 
hope to demonstrate well-posedness, e.g. by employing the 
Laplace-Fourier method~\cite{Kre70,Agr72,Met00,SarTig12}, which 
unfortunately does not apply to arbitrary strongly hyperbolic 
evolution systems.

We study strong and symmetric hyperbolicity for a special class
of higher order evolution equations. Hyperbolicity of higher
order systems was studied before in a different context,
see e.g.~\cite{Bei04,Wal84a,Chr00}. The equations of interest here 
are linear constant coefficient, first order in time and arbitrary 
order in space systems (FT$N$S). They admit a reduction to first
order for which standard definitions of hyperbolicity are applicable.

Reductions to first order are obtained by introducing new variables
for all but the highest order derivatives~\cite{Ger96},
which is a common approach in numerical
relativity~\cite{KidSchTeu01,SarCalPul02,BeySar04,LinSchKid05}.
In this way the known, first order definitions of hyperbolicity can be
applied, and powerful numerical methods are available in the
construction of approximate
solutions~\cite{GusKreOli95,KreLor89,Gus08}.

However, making the first order reduction raises questions,
e.g. about the number of constraints to impose and the size of
the approximation error~\cite{KrePetYst02,CalHinHus05}. For practical 
applications it also incurs a cost. The memory footprint of 
any numerical approximation method increases hugely due to the
auxiliary variables.

The question we address here is whether or not we can characterize 
hyperbolicity of FT$N$S systems without making a differential or
pseudo-differential reduction to first order. The idea is to establish
when ``good'' reductions of either type can be made. For the important
case of second order in space systems this question was already
answered satisfactorily in the affirmative by Gundlach and 
Mart\'in-Garc\'ia~\cite{GunGar05}, see also~\cite{GunGar04,GunGar04a,
GunGar06} for applications of these ideas. The present work is the 
extension of those calculations to first order in time, higher order
in space systems. The generalization here will be useful in analyzing 
higher derivative systems. A more abstract treatment of evolution 
systems can be found in~\cite{Bey05}.

We propose definitions of strong and symmetric hyperbolicity for
FT$N$S systems without reference to any first order system.
This enables us to demonstrate equivalence of FT$N$S strong
hyperbolicity with the existence of an iterative first 
order reduction, either differential or pseudo-differential, that is 
strongly hyperbolic in the sense of first order systems.

We also find that if a higher order system has a symmetric hyperbolic
first order reduction then the equations must satisfy the FT$N$S
definition of symmetric hyperbolicity. Conversely, for systems
containing up to fourth order spatial derivatives, we show that
the new definition of symmetric hyperbolicity is also sufficient
for the existence of a symmetric hyperbolic first order reduction.

The first order reduction used in this case is a direct, not
iterative method, i.e. it differs from the one
applied in the proofs concerning strong hyperbolicity.
As discussed in section \ref{sec:diff_reds},
the iterative, order-by-order reduction is not appropriate for
symmetric hyperbolicity.

The Laplace-Fourier method, which can be used to prove well-posedness
of initial boundary value problems is not considered here.
Higher order derivative evolution systems can be treated by 
this technique (see for example~\cite{KreOrtPet10}), because it 
once again relies on pseudo-differential calculus.

The paper is structured as follows. In 
section~\ref{section:Basic_Hyp} we review the definitions of 
strong and symmetric hyperbolicity for first order in time, 
second order in space systems. For pedagogical purposes, in 
section~\ref{section:Third_order}, we explicitly present the special 
case of the extension of the theory to first order in time, third order 
in space systems. Then we provide a general formulation of first 
order in time,~$N$-th order in space systems in 
section~\ref{section:Higher_order}. In section~\ref{section:FTNS_strong_hyp} 
we discuss strong hyperbolicity using an iterative reduction 
procedure. In section~\ref{section:FTNS_sym_hyp} definitions 
for symmetric hyperbolicity are given for the higher order system 
without reduction. The relationship between the definitions is 
then investigated using a direct reduction to first order.
We conclude in section~\ref{section:Conclusion}. 

\section{Basic notions of hyperbolicity}
\label{section:Basic_Hyp}

In this article we consider a special class of linear systems 
of partial differential equations with constant coefficients. 
We are mainly interested in questions about the well-posedness 
of initial (boundary) value problems.

\paragraph*{Well-posedness:} An initial (boundary) value 
problem is called well-posed if there is a unique solution that 
depends continuously, in some appropriate norm, on the choice 
of initial data.

\paragraph*{Second order systems:} The class of partial 
differential equations under consideration is a generalization 
of the first order in time, second order in space systems 
analyzed in~\cite{GunGar05,GunGar04,GunGar04a}. We start with 
a short summary of that work. Consider first order in time, 
second order in space systems of the form
\begin{align}
\label{eq:systems_FOITSOIS}
\p_t \tilde u=(A^u{}_u)^i\p_i\tilde u+A^u{}_v\tilde v+S_{u},\quad\quad
\p_t \tilde v=(A^v{}_u)^{ij}\p_i\p_j\tilde u
+(A^v{}_v)^i\p_i\tilde v+S_{v}.
\end{align}
where we have absorbed all non-principal terms into the source
functions $S$. They have the form
$S_{u} = \alpha_1 \tilde u + f_{u}$, and
$S_{v} = \alpha_2^i \p_i \tilde u + \alpha_3 \tilde u 
+ \alpha_4 \tilde v + f_{v}$,
where~$f_{u}$ and~$f_{v}$ do not depend on~$\tilde u$ 
or~$\tilde v$ and the~$\alpha_i$ are constant coefficient 
matrices.

\paragraph*{Principal part:}
The \emph{principal part} of the system~\eqref{eq:systems_FOITSOIS} is
\begin{align}
\p_t\tilde u\simeq (A^u{}_u)^i\p_i\tilde u + A^u{}_v\tilde v,\quad
\p_t\tilde v\simeq (A^v{}_u)^{ij}\p_i\p_j\tilde u+(A^v{}_v)^i\p_i\tilde v,\nonumber
\end{align}
where~$\simeq$ denotes equality up to non-principal terms.
We denote the matrix
\begin{align}\nonumber
\mathcal A_2^p{}_i{}^j = 
\left(\begin{array}{cc}
 (A^u{}_u)^j\delta^p{}_i & A^u{}_v\delta^p{}_i \\
 (A^v{}_u)^{pj}          & (A^v{}_v)^p
\end{array}\right)
\end{align}
the \emph{principal matrix} of the system~\eqref{eq:systems_FOITSOIS}.
For a fixed unit spatial vector $s^i$ the \emph{principal symbol} of the 
system~\eqref{eq:systems_FOITSOIS} is
\begin{align}\nonumber
P_2^s = \left(\begin{array}{cc}
 (A^u{}_u)^i s_i & A^u{}_v \\
 (A^v{}_u)^{ij} s_is_j & (A^v{}_v)^i s_i
\end{array}\right).  
\end{align}
Note that with~$S_i:=\mbox{diag}(s_i,1)$ one obtains the principal symbol 
from the principal matrix by the contraction~$P_2^s = S^i A_2^p{}_i{}^j S_j s_p$.
Furthermore the equations of motion for the variables~$\tilde{u}_i=(\p_i\tilde u,\tilde v
)^\dagger$, are up to non principal 
terms~$\p_t\tilde u_i\simeq A_2^p{}_i{}^j\p_p\tilde u_j$.

\paragraph*{Strong hyperbolicity:}
Following~\cite{GunGar05,SarTig12,GunGar04,GunGar04a} the 
system~(\ref{eq:systems_FOITSOIS}) 
is called strongly hyperbolic if there exist a constant $M_2>0$
and a family of hermitian matrices $H_2(s)$ such that
\begin{align}
\label{eq:FT2S_strong_hyp_def}
H_2(s) P_2^s = (P_2^s)^\dag H_2(s),\quad
M_2^{-1}\,I \leq H_2(s) \leq M_2\,I,
\end{align}
where we used the standard inequality for hermitian matrices
\begin{align}
\label{eq:matrix_inequality}
A\leq B \quad \Leftrightarrow \quad v^\dag A v \leq v^\dag B v\;\;\forall v.
\end{align}
It is a necessary and sufficient condition for well-posedness of 
the initial value problem. This definition is furthermore equivalent 
to the existence of a fully first order reduction 
of~(\ref{eq:systems_FOITSOIS}) which satisfies the standard definition 
of strong hyperbolicity for first order systems. 

Note that this is not quite equivalent to the definition
given in~\cite{GunGar05,GunGar04,GunGar04a}, where it is required that
the principal symbol has real eigenvalues and a complete set of
eigenvectors that depend continuously on $s$.

What can be shown \cite{SarTig12,KreLor89} is that \eqref{eq:FT2S_strong_hyp_def}
is equivalent to the existence of a constant $K_2>0$
and a family of matrices $T_2(s)$ such that
\begin{align}
\label{eq:FT2S_strong_hyp_eqiv_def}
T_2(s)^{-1} P_2^s T_2(s) &= \Lambda(s), &
K_2^{-1} &\leq \| T_2(s)\| \leq K_2,
\end{align}
with a real, diagonal matrix $\Lambda(s)$ and the standard
spectral norm $\|\cdot\|$.

In view of example~$12$ in~\cite{SarTig12}, the 
continuity of $T_2(s)$ required in~\cite{GunGar05,GunGar04,GunGar04a}
is sufficient
to guarantee the existence of~$K_2$, but not necessary. 
Fortunately despite the continuity condition being slightly too 
restrictive, the construction of first order reductions with the 
approach of~\cite{GunGar05} is unaltered if we instead 
require~\eqref{eq:FT2S_strong_hyp_eqiv_def}. Our treatment of 
strong hyperbolicity for FT$N$S systems is therefore the 
natural generalization of~\cite{GunGar05}.

\paragraph*{Symmetric hyperbolicity:} For the analysis of the 
initial \emph{boundary} value problem the stronger notion of 
symmetric hyperbolicity is desirable. It guarantees the 
existence of a conserved energy in the principal part and 
allows the construction of boundary conditions such that the 
initial boundary value problem is well posed. A Hermitian 
matrix
\begin{align}
\nonumber
H_2^{ij} &=
\left(\begin{array}{cc}
H_{11}^{ij} & H_{12}^i \\
H_{12}^{\dagger j} & H_{22}
\end{array}\right),
\end{align}
independent of $s^i$, such that the matrix~$S_iH_2^{ij}A_2^p{}_j{}^ks_pS_k$,
is Hermitian for every spatial vector $s^i$ is called a candidate 
symmetrizer. The system~\eqref{eq:systems_FOITSOIS} is called 
symmetric hyperbolic if there exists a positive definite candidate 
symmetrizer. The aforementioned conserved energy is
\begin{align}
\nonumber
E &= \int d^3x \; \epsilon = \int d^3x \; \tilde u_i^\dag H_2^{ij} \tilde u_j.
\end{align}
It can be shown that $\p_t E \simeq 0$ if~$(S_iH_2^{ij}A_2^p{}_j{}^ks_pS_k)$ 
is Hermitian~\cite{GunGar05}.

\section{Third order systems}
\label{section:Third_order}

Before starting with the generalization to arbitrary order we 
discuss third order systems here. In~\cite{GunGar05} Gundlach 
and Mart\'in-Garc\'ia give different possible definitions of 
hyperbolicity of second order systems. They showed that these 
definitions are equivalent to the existence of a first order 
reduction with the same level of hyperbolicity. We follow a 
similar approach here.

\subsection{Definition of third order systems}


\paragraph*{FT3S systems:} We consider first order in time, 
third order in space (FT3S) systems of the form
\begin{align}
\nonumber
\p_t u &=
(A^u{}_{u})^i \p_i u
+(A^u{}_{v}) v
+(B^u{}_{1u}) u
+s^u,\\
\nonumber
\p_t v &=
(A^v{}_{u})^{ij} \p_i\p_j u
+(A^v{}_{v})^i \p_i v
+(A^v{}_{w}) w
+(B^v{}_{1u})^i\p_i u
+(B^v{}_{2u}) u
+(B^v{}_{1v}) v
+s^v,\\
\nonumber
\p_t w &=
(A^w{}_{u})^{ijk} \p_i\p_j\p_k u
+(A^w{}_{v})^{ij} \p_i\p_j v
+(A^w{}_{w})^{i} \p_i w
+(B^w{}_{1u})^{ij}\p_i\p_j u
+(B^w{}_{2u})^i\p_i u
\\
&\quad
+(B^w{}_{3u}) u+(B^w{}_{1v})^i\p_i v
+(B^w{}_{2v}) v
+(B^w{}_{1w}) w
+s^w,\label{eq:FT3S}
\end{align}
where $s^u$, $s^v$ and $s^w$ are arbitrary source terms that 
do not depend on~$u$,~$v$ or~$w$. In analogy to the second order case 
we define the principal part of that system as
\begin{align}
\nonumber
\p_t u &\simeq
(A^u{}_{u})^i \p_i u
+(A^u{}_{v}) v,\\
\nonumber
\p_t v &\simeq
(A^v{}_{u})^{ij} \p_i\p_j u
+(A^v{}_{v})^i \p_i v
+(A^v{}_{w}) w,\\
\p_t w &\simeq
(A^w{}_{u})^{ijk} \p_i\p_j\p_k u
+(A^w{}_{v})^{ij} \p_i\p_j v
+(A^w{}_{w})^{i} \p_i w,
\nonumber
\end{align}
where as before~$\simeq$ denotes equality up to non principal terms.
As the principal matrix of the system~\eqref{eq:FT3S} we define
\begin{align}
\nonumber
\mathcal A_3^p{}_{kl}{}^{mn} &=\left(
\begin{array}{ccc}
\delta^p_{(k}\delta^{(m}_{l)}(A^u{}_u)^{n)} &
\delta^p_{(k}\delta_{l)}^m (A^u{}_v) & 0 \\
\delta^p_k(A^v{}_u)^{mn} &
\delta^p_k(A^v{}_v)^m & \delta^p_k (A^v{}_w) \\
(A^w{}_u)^{pmn} &
(A^w{}_v)^{pm} & (A^w{}_w)^p
\end{array}
\right)
\end{align}
and the principal symbol is~$P_3^s= S^{ij} A_3^p{}_{ij}{}^{kl} S_{kl} s_p$
where~$S_{ij}=\mbox{diag}(s_is_j,s_i,1)$.

\subsection{Reduction to second order}
\label{sec:2nd_order_reduction}

\paragraph*{Reduction variables:} We are going to define strong 
hyperbolicity of FT3S systems by referring to strong hyperbolicity 
of second order systems. Here we define what we mean by a reduction 
of the FT3S system~\eqref{eq:FT3S} to second order. We introduce 
a vector of reduction variables~$d_a$. The reduction variables 
eventually replace the spatial derivatives of the fields~$u$ in 
the reduced system: $d_a = \p_a u$.

We use lower case letters from the beginning of the Latin 
alphabet as derivative indices without further meaning. In what 
follows their use simply helps to identify indices which belong
to~$d$ which makes it simpler to work with the principal matrix 
of the second order reduction.

\paragraph*{Unmodified evolution equations:} The aim is now to 
include the~$d_a$ as independent variables in a first order in 
time second order in space (FT2S) system. Therefore an evolution 
equation for these variables is needed which must be consistent 
with $d_a = \p_a u$. One gets this equation e.g. by 
taking the spatial derivative of the evolution equation for~$u$:
\begin{align}
\label{eq:FT3S_particular_reduction_only_d}
\p_t d_a &=
(A^u{}_{u})^j \p_a d_j
+(A^u{}_{v}) \p_a v
+(B^u{}_{1u}) \p_a u
+\p_a s^u.
\end{align}
In this equation $\p_a s^u$ does not depend on the variables~$u$,~$v$,~$w$ 
or~$d_a$ and hence can be considered as a given source function.

\paragraph*{Auxiliary constraints:} Obviously the system 
composed of~\eqref{eq:FT3S} 
and~\eqref{eq:FT3S_particular_reduction_only_d} is not second order.
However, one can get rid of the higher order terms by adding linear 
combinations of the following functions and their derivatives to the 
right hand sides
\begin{align}
c_a := \p_a u - d_a,\quad
c_{ia} := \frac12\left(\p_i d_a-\p_a d_i\right),\quad
c_{ija}:= \p_i \p_j d_a - \p_{(i} \p_j d_{a)}.
\label{eq:FT2S_reduction_aux_cnstrs}
\end{align}
These functions vanish when $d_a = \p_a u$
is satisfied. We will show that their evolution system is closed
for the FT2S systems that we consider here.
The functions~$c$ are denoted \emph{auxiliary constraints}.
Furthermore the~$c_{ija}$ can be written as a linear
combination of derivatives of the~$c_{ia}$: $c_{ija} = 2/3 \p_i c_{ja} + 2/3 \p_j c_{ia}$.
Therefore their addition to the right hand sides is already 
covered by the addition of derivatives of the~$c_{ia}$. We do 
not consider the~$c_{ija}$ separately.

\paragraph*{Reduced system:} FT2S systems which are obtained 
in that way have the form
\begin{align}
\label{eq:gen_FT3S_reduction}
\nonumber
\p_t u &=
(A^u{}_{u})^i \p_i u
+(A^u{}_{v}) v
+(B^u{}_{1u}) u
+s^u
+(D^u)^a c_a + (\bar D^u)^{ia} c_{ia}\\
\nonumber \p_t d_a &=
(B^u{}_{1u}) \p_a u
+(A^u{}_{u})^b \p_a d_b
+(A^u{}_{v}) \p_a v
+\p_a s
+(D)_a{}^b c_b + (\bar D)_a{}^{kb} c_{kb},\\
\nonumber
\p_t v &=
(B^v{}_{1u})^i\p_i u
+(A^v{}_{u})^{ia} \p_i d_a
+(A^v{}_{v})^i \p_i v
+(A^v{}_{w}) w
+(B^v{}_{2u}) u
+(B^v{}_{1v}) v
+s^v\\
\nonumber & \quad
+(D^v)^a c_a + (\bar D^v)^{ia} c_{ia},\\
\nonumber
\p_t w &=
(B^w{}_{1u})^{ij}\p_i\p_j u
+(A^w{}_{u})^{ija} \p_i\p_jd_a
+(A^w{}_{v})^{ij} \p_i\p_j v
+(A^w{}_{w})^{i} \p_i w
+(B^w{}_{2u})^i\p_i u\\
\nonumber
& \quad
+(B^w{}_{3u}) u
+(B^w{}_{1v})^i\p_i v
+(B^w{}_{2v}) v
+(B^w{}_{1w}) w
+(D^w)^{ka} \p_kc_a\\
& \quad
+ (\bar D^w)^{kja} \p_kc_{ja}.
\end{align}
We denote the constant matrices $D$ and $\bar D$ the \emph{reduction
parameters}. Since $c_{jb}$ is antisymmetric we can assume without loss of
generality
\begin{align}
\nonumber
(\bar D^u)^{ia} = -(\bar D^u)^{ai},\,\,
\bar D_a{}^{kb} = -\bar D_a{}^{bk},\,\,
(\bar D^v)^{ia} = -(\bar D^v)^{ai},\,\,
(\bar D^w)^{kja} = -(\bar D^w)^{kaj}.
\end{align}

\begin{defi}
We call a first order in time, second order in space system of the form
\eqref{eq:gen_FT3S_reduction} an \emph{FT2S reduction} of the
first order in time, third order in space system~\eqref{eq:FT3S}.
\end{defi}

This definition of a reduction to second order is quite restrictive, one
may think of other definitions that are satisfied by more second order
systems. Indeed one finds that it is too restrictive to be used in 
a definition of symmetric hyperbolicity for FT3S systems. We discuss that aspect
shortly in section \ref{sec:diff_reds}.

\paragraph*{Auxiliary constraint evolution:} For every FT2S 
reduction of~\eqref{eq:FT3S}, provided that the reduction 
constraints are satisfied, one can show that there is a 
relationship between solutions of the two systems.
\begin{lem}
If the system~\eqref{eq:gen_FT3S_reduction}
is an FT2S reduction of~\eqref{eq:FT3S}
and $(u, d_a, v, w)$ is a solution of
\eqref{eq:gen_FT3S_reduction} with vanishing auxiliary constraints
\eqref{eq:FT2S_reduction_aux_cnstrs}
then~$(u,v,w)$ is a solution of the FT3S system~\eqref{eq:FT3S}.
Moreover, if~$(u,v,w)$ is a solution of the FT3S system~\eqref{eq:FT3S} and
the system~\eqref{eq:gen_FT3S_reduction} is an FT2S reduction
of~\eqref{eq:FT3S} then~$(u,\p_a u,v,w)$ is a solution of the FT2S
system~\eqref{eq:gen_FT3S_reduction} with vanishing auxiliary 
constraints~\eqref{eq:FT2S_reduction_aux_cnstrs}.
\end{lem}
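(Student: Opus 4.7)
The plan is to prove both directions by direct substitution, exploiting the fact that the reduction equations were constructed precisely so that the constraint terms $(D)c_a$ and $(\bar D)c_{ia}$ drop out under $d_a=\p_a u$, while the principal terms carrying derivatives of $d_a$ coincide with the corresponding higher derivatives of $u$ in~\eqref{eq:FT3S}. The main content is bookkeeping; the only subtlety is the (im)plicit use of equality of mixed partials and the symmetrization of tensor indices.

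First I would handle the forward direction. Assume $(u,d_a,v,w)$ solves \eqref{eq:gen_FT3S_reduction} and that $c_a=0$ and $c_{ia}=0$. Then every term of the form $(D^u)^a c_a$, $(D)_a{}^b c_b$, $(D^v)^a c_a$, $(\bar D^u)^{ia}c_{ia}$, etc., vanishes pointwise, and the spatial derivatives of $c_{ja}$ appearing in the $\p_t w$ equation vanish as well since $c_{ja}$ vanishes identically as a function on spacetime (not merely at one instant — but here the argument is even simpler since these are algebraic substitutions into the right hand side). Substituting $d_a=\p_a u$ into the remaining terms, the expressions $(A^v{}_u)^{ia}\p_i d_a$ and $(A^w{}_u)^{ija}\p_i\p_j d_a$ become $(A^v{}_u)^{ia}\p_i\p_a u$ and $(A^w{}_u)^{ija}\p_i\p_j\p_a u$, which by the construction of the reduction equal the corresponding principal terms $(A^v{}_u)^{ij}\p_i\p_j u$ and $(A^w{}_u)^{ijk}\p_i\p_j\p_k u$ in~\eqref{eq:FT3S} (after symmetrization on the derivative indices, which is free by commutativity of partial derivatives). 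The $\p_t u$, $\p_t v$, $\p_t w$ equations of~\eqref{eq:gen_FT3S_reduction} then read verbatim as~\eqref{eq:FT3S}.

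For the converse, assume $(u,v,w)$ solves \eqref{eq:FT3S} (with enough regularity to take the requisite spatial derivatives) and set $d_a:=\p_a u$. Then $c_a=\p_a u-d_a=0$ identically, and $c_{ia}=\tfrac12(\p_i d_a-\p_a d_i)=\tfrac12(\p_i\p_a u-\p_a\p_i u)=0$ by equality of mixed partials, so the auxiliary constraints vanish. The $\p_t u$, $\p_t v$, $\p_t w$ equations of \eqref{eq:gen_FT3S_reduction} coincide with those of \eqref{eq:FT3S} after this substitution (again using the identification of $(A^v{}_u)^{ia}$ with $(A^v{}_u)^{ij}$, etc., built into the definition of reduction). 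Finally, the $\p_t d_a$ equation is verified by differentiating the $\p_t u$ equation of \eqref{eq:FT3S} with respect to $\p_a$, which is exactly how \eqref{eq:FT3S_particular_reduction_only_d} was obtained; after using $d_b=\p_b u$ on the right hand side and noting that the constraint terms vanish identically, this reproduces the $\p_t d_a$ equation of \eqref{eq:gen_FT3S_reduction}.

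The only step that is not pure bookkeeping is the identification of reduction coefficients with the FT3S coefficients — but this is guaranteed by Definition~1, which stipulates that \eqref{eq:gen_FT3S_reduction} is obtained from \eqref{eq:FT3S} and \eqref{eq:FT3S_particular_reduction_only_d} by adding only linear combinations of the auxiliary constraints and their derivatives. I would mention this explicitly and then leave the remaining index-tracking implicit, as the calculation is completely routine.
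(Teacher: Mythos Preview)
Your proposal is correct and follows essentially the same direct-substitution approach as the paper: both directions are verified by plugging in and using that the constraint terms drop out when $d_a=\p_a u$. The only difference is that the paper additionally writes out the closed evolution system for $c_a$, $c_{ia}$, $c_{ija}$; this is useful supplementary information for later constraint-propagation arguments but is not logically required for the lemma as stated, so your omission of it is harmless.
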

\begin{proof}
By inserting the subset~$(u,v,w)$ of the FT2S solution into the FT3S
system one can easily check that these functions satisfy~\eqref{eq:FT3S},
because the auxiliary constraints~\eqref{eq:FT2S_reduction_aux_cnstrs} 
vanish by assumption. Moreover, if~$(u,v,w)$ is a solution 
of~\eqref{eq:FT3S} then one can insert~$(u, \p_a u, v, w)$ into the 
system~\eqref{eq:gen_FT3S_reduction} to see that it is a solution.

The reason for this being that the auxiliary constraint evolution system 
is closed:
\begin{align}
\nonumber
\p_t c_a &= \p_t d_a - \p_a\p_t u\nonumber\\
&=
(A^u{}_{u})^b \p_a c_b
+\left((D)_a{}^b - (D^u)^b \p_a \right) c_b
+\left((\bar D)_a{}^{kb} - (\bar D^u)^{kb} \p_a \right) c_{kb},\nonumber\\
\nonumber
\p_t c_{ia} &= \p_i \p_t d_a - \p_a \p_t d_i\\
\nonumber &=
(D)_a{}^b \p_i c_b + (\bar D)_a{}^{kb} \p_i c_{kb}
-(D)_i{}^b \p_a c_b - (\bar D)_i{}^{kb} \p_a c_{kb},\\
\nonumber
\p_t c_{ija} &= 2/3 \p_j \p_t c_{ia} + 2/3 \p_a \p_t c_{ij}.
\end{align}
It is straightforward to check that $(u, \p_a u, v, w)$ solves
\eqref{eq:FT2S_reduction_aux_cnstrs}.
\end{proof}

\paragraph*{Principal part of the FT2S reduction:} According to 
the definitions given in section \ref{section:Basic_Hyp} the 
principal part of the FT2S reduction~\eqref{eq:gen_FT3S_reduction}
is

\begin{align}
\nonumber
\p_t u &\simeq
(A^u{}_{u})^i \p_i u
+(D^u)^a \p_a u
+(\bar D^u)^{ia} \p_{i} d_a,\\
\nonumber \p_t d_a &\simeq
(B^u{}_{1u}) \p_a u
+(D)_a{}^b \p_b u
+(A^u{}_{u})^b \p_a d_b
+(\bar D)_a{}^{kb} \p_{k}d_b
+(A^u{}_{v}) \p_a v,\\
\nonumber
\p_t v &\simeq
(B^v{}_{1u})^i\p_i u
+(D^v)^a \p_a u
+(A^v{}_{u})^{ia} \p_i d_a
+(\bar D^v)^{ia} \p_{i}d_a
+(A^v{}_{v})^i \p_i v
+(A^v{}_{w}) w,\\
\nonumber
\p_t w &\simeq
(B^w{}_{1u})^{ij}\p_i\p_j u
+(D^w)^{ka} \p_k\p_a u
+(A^w{}_{u})^{ija} \p_i\p_jd_a
+(\bar D^w)^{kja} \p_k\p_{j} d_a\\
\nonumber
&\quad
+(A^w{}_{v})^{ij} \p_i\p_j v
+(A^w{}_{w})^{i} \p_i w
\end{align}
and the principal matrix is
\begin{align}
\mathcal A_2^p{}_i{}^j{}_a{}^b &=
\left(
\begin{array}{cccc}
\delta^p_i\left((A^u{}_u)^j + (D^u)^j\right) &
\delta^p_i (\bar D^u)^{jb} & 0 & 0 \\
\delta^p_i\left((B^u{}_{1u})\delta_a^j + (D)_a{}^j\right) &
\delta^p_i\left((A^u{}_u)^b\delta^j_a + (\bar D)_a{}^{jb}\right) &
(A^u{}_v)\delta^p_i\delta_a^j & 0 \\
\delta^p_i\left((B^v{}_{1u})^{j} + (D^v)^{j}\right) &
\delta^p_i\left((A^v{}_u)^{jb} + (\bar D^v)^{jb}\right) &
\delta^p_i(A^v{}_v)^j & \delta^p_i (A^v{}_w) \\
(B^w{}_{1u})^{pj} + (D^w)^{pj} & (A^w{}_u)^{pjb}+(\bar D^w)^{pjb} &
(A^w{}_v)^{pj} & (A^w{}_w)^p
\end{array}
\right).\nonumber
\end{align}

\subsection{Strong hyperbolicity}
\label{sec:FT3S_strong_hyp}

\paragraph*{Definitions of strong hyperbolicity:} We show that 
the following definitions of third order strong hyperbolicity 
are equivalent
\setcounter{subdefi}{1}
\renewcommand{\thedefi}{\arabic{defi}\alph{subdefi}}
\begin{defi}
\label{def:strong_hyp_FT3S_red}
The FT3S system~\eqref{eq:FT3S} is called FT2S strongly hyperbolic if
there exists an FT2S reduction~\eqref{eq:gen_FT3S_reduction}
which is strongly hyperbolic in the sense
described in section \ref{section:Basic_Hyp}.
\end{defi}
\addtocounter{defi}{-1}
\addtocounter{subdefi}{1}
\begin{defi}
\label{def:strong_hyp_FT3S}
The FT3S system~\eqref{eq:FT3S} is called FT3S strongly hyperbolic if
there exist a constant $M_3>0$ and a family of hermitian matrices
$H_3(s)$ such that
\begin{align}
\label{eq:FT3S:FT3S_str_hyp}
H_3(s) P_3^s = (P_3^s)^\dag H_3(s),\quad
M_3^{-1}\,I  \leq H_3(s) \leq M_3\,I,
\end{align}
where the matrix inequality is understood in the standard sense
\eqref{eq:matrix_inequality}.
\end{defi}
\renewcommand{\thedefi}{\arabic{defi}}
With this one can apply an iterative procedure which reduces strong
hyperbolicity of FT3S systems to strong hyperbolicity of fully 
first order systems. First one reduces the FT3S system to second 
order and after that the resulting FT2S system to a fully first 
order system by applying the work of Gundlach and 
Mart\'in-Garc\'ia~\cite{GunGar05}, FT3S~$\rightarrow$ 
FT2S~$\rightarrow$ FT1S.

A third possible definition of strong 
hyperbolicity employs a pseudo-differential reduction. One finds that 
this definition is very similar to our 
definition~\ref{def:strong_hyp_FT3S}. We discuss the topic 
in section~\ref{app:pseudo-red} for systems or arbitrary order.

\paragraph*{FT2S strong hyperbolicity $\Rightarrow$ FT3S strong hyperbolicity:}
In the proof that definition \ref{def:strong_hyp_FT3S_red}
implies \ref{def:strong_hyp_FT3S} we start with a 2+1 decomposition
of the reduction variable $d_a$. With the orthogonal projector, $q_a^A$, of the given vector $s$
we decompose $d_a = q_a^B d_B + s_a d_s$, where $d_B s^B = 0$.
Partitioning the state vector as~$(u,d_A,d_s,v,w)$ the FT2S 
principal symbol~$P_2^s{}_A{}^B$ is
\begin{align}
\label{eq:FT2S_reduction_PSymbol}
P_2^s{}_A{}^B &=
\left(
\begin{array}{cc}
\tilde X_A{}^B & 0 \\
\tilde Y^B & P_3^s
\end{array}
\right),
\end{align}
where
\begin{align}
\tilde X_A{}^B
&=
\left(
\begin{array}{cc}
\left((A^u{}_u)^j + (D^u)^j\right) s_j &
(\bar D^u)^{jb} s_j q_b^B\\
(D)_a{}^j q_A^a s_j &
(\bar D)_a{}^{jb} q_A^a s_j q_b^B
\end{array}
\right)\nonumber
\end{align}
and
\begin{align}
\tilde Y^B &=
\left(
\begin{array}{cc}
(B^u{}_{1u}) + (D)_a{}^j s^a s_j &
\left((A^u{}_u)^b + (\bar D)_a{}^{jb}s^a s_j\right)q_b^B\\
\left((B^v{}_{1u})^{j} + (D^v)^{j}\right) s_j &
\left((A^v{}_u)^{jb} + (\bar D^v)^{jb}\right)s_j q_b^B\\
\left((B^w{}_{1u})^{pj} + (D^w)^{pj}\right)s_p s_j &
\left((A^w{}_u)^{pjb} + (\bar D^w)^{pjb}\right)s_p s_j q_b^B
\end{array}
\right).\nonumber
\end{align}
There we used that the~$\bar D$ are antisymmetric in the last two indices.
That is, if one contracts both indices with $s$ then the result vanishes.

The assumption that \eqref{eq:FT3S} is FT2S strongly hyperbolic means
that there exist a constant $M_2$ and a family of matrices
$H_2(s)^{AB}$ such that
\begin{align}
\label{eq:FT3S:FT2S_str_hyp_a}
&H_2(s)^{AB} P_2^s{}_B{}^C = (P_2^s{}_B{}^A)^{\dag} H_2(s)^{BC}, \\
\label{eq:FT3S:FT2S_str_hyp_b}\nonumber
&M_2^{-1}\,I^{AB} \leq  H_2(s)^{AB} \leq M_2\,I^{AB},
\end{align}
where $I^{AB}$ is the appropriate identity matrix.

We decompose $H_2(s)^{AB}$ in a way compatible to the decomposition
in \eqref{eq:FT2S_reduction_PSymbol}:
\begin{align}
H_2(s)^{AB} &=
\left(
\begin{array}{cc}
H_{11}(s)^{AB} & H_{12}(s)^{A}\\
H_{12}(s)^{\dag\;B} & H_{22}(s)
\end{array}
\right),\nonumber
\end{align}
and find
\begin{align}
H_2(s)^{AB} P_2^s{}_B{}^C&=
\left(
\begin{array}{cc}
H_{11}(s)^{AB}\tilde X_B{}^C + H_{12}(s)^{A}\tilde Y{}^C &
H_{12}(s)^{A} P_3^s \\
H_{12}(s)^{\dag\;B}\tilde X_B{}^C + H_{22}(s)\tilde Y^C &
H_{22}(s) P_3^s
\end{array}
\right).
\end{align}
Looking at the lower right block of this expression
equation \eqref{eq:FT3S:FT2S_str_hyp_a} implies 
$H_{22}(s) P_3^s = (P_3^s)^\dag H_{22}(s)$.
Furthermore we have obviously $H_{22}(s) = H_{22}(s)^\dag$ and
\begin{align}
M_2^{-1} v^\dag v &\leq
v^\dag H_{22}(s) v
=
\left(0, v^\dag\right) H_{2}(s)^{AB} \left(0, v^\dag\right)^\dag
\leq M_2 v^\dag v\quad \forall v,\nonumber
\end{align}
because \eqref{eq:FT3S:FT2S_str_hyp_b}
is satisfied by assumption.

Hence, the matrix $H_3(s):=H_{22}(s)$ satisfies \eqref{eq:FT3S:FT3S_str_hyp}
and FT3S strong hyperbolicity of \eqref{eq:FT3S} is shown.

\paragraph*{FT3S strong hyperbolicity $\Rightarrow$ FT2S strong hyperbolicity:}
For the reverse direction we need to choose the reduction parameters
appropriately. One can check easily that the first row and column of
\eqref{eq:FT2S_reduction_PSymbol} vanish with the choice
\begin{align}
(D^u)^j &= -(A^u{}_u)^j,&
(D)_a{}^j &= -(B^u{}_{1u})\delta_a^j,&
(D^v)^{j} &= -(B^v{}_{1u})^{j},\nonumber\\
(D^w)^{pj} &= -(B^w{}_{1u})^{pj},&
(\bar D^u)^{jb} &= 0. &
\label{eq:FT3S_partial_choice}
\end{align}
We call~\eqref{eq:FT3S_partial_choice} the \emph{partial choice} 
of reduction parameters. Under the partial choice~$P_2^s{}_A{}^B$ 
has the following lower block triangular form,
\begin{align}
P_2^s{}_A{}^B &=
\left(
\begin{array}{ccc}
0 & 0 & 0 \\
0 & X_A^B & 0 \\
0 & Y^B & P_3^s \\
\end{array}
\right),\nonumber
\end{align}
where
\begin{align}
\label{eq:FT3S_XAB}
X_A^B &= (\bar D)_a{}^{jb} q_A^a s_j q_b^B, \nonumber\\
Y^B &=
\left(
\begin{array}{c}
\left((A^u{}_u)^b + (\bar D)_a{}^{jb}s^a s_j\right)q_b^B \\
\left((A^v{}_u)^{jb} + (\bar D^v)^{jb}\right)s_j q_b^B \\
\left((A^w{}_u)^{pjb} + (\bar D^w)^{pjb}\right)s_p s_j q_b^B
\end{array}
\right).
\end{align}

As mentioned in section \ref{section:Basic_Hyp},
definition \ref{def:strong_hyp_FT3S_red} is equivalent to
the existance of a constant $K_2$ and a family of
matrices $T_2(s)_A{}^B$ with
$K_2^{-1} \leq \| T_2(s)_A{}^B \| \leq K_2$
such that $T_2(s)^{-1}{}_A{}^B P_2^s{}_B{}^C\,T_2(s){}_C{}^D$ is real and diagonal.
Here we show this property instead of the original definition.

Following~\cite{GunGar05} we choose the reduction parameters
such that $X_A^B$ is diagonalizable:
\begin{align}
\label{eq:barD_epsilon_choice}
(\bar D)_a{}^{jb} &= i\lambda \varepsilon_a{}^{jb}
\end{align}
with $\lambda\in\mathbb R$ and~$\varepsilon_a{}^{jb}$ the Levi-Civita symbol. The 
eigenvalues of~$X_A^B$ become $\pm\lambda$. They are independent 
of $s$ and the eigenvalues of $P_2^s{}_A{}^B$ are the union of the
eigenvalues of $P_3^s$ and $\pm\lambda$.

Using that $P_3^s$ is bounded, because it is a sum of
products of bounded matrices:
\begin{align}
\|P_3^s\| &= \|S^{ij}\mathcal A^{p}{}_{ij}{}^{kl}S_{kl}s_p\|
\leq \|S_{ij}\|\, \|\mathcal A^{p}{}_{ij}{}^{kl}\|\, \|S_{kl}\|\, \|s_p\|,
\end{align}
we choose $\lambda$ larger than all eigenvalues of $P_3^s$.
Together with the assumption that \eqref{eq:FT3S}
is FT3S strongly hyperbolic, i.e. that $P_3^s$ is
diagonalizable, this choice of $\lambda$ makes $P_2^s{}_A{}^B$
diagonalizable as well.

The corresponding similarity transformation can be constructed
from the eigenvectors of $P_2^s{}_A{}^B$. One finds that
given an eigenvector, $v$, of $P_3^s$ with
eigenvalue $\alpha$ and an eigenvector, $w_B$, of $X_A^B$ then
\begin{align}
\nonumber
P_2^s{}_A{}^B
\left(
\begin{array}{c}
0\\
v
\end{array}
\right)
&=
\alpha
\left(
\begin{array}{c}
0\\
v
\end{array}
\right),&
P_2^s{}_A{}^B
\left(
\begin{array}{c}
w_B\\
w
\end{array}
\right)
&=
\lambda
\left(
\begin{array}{c}
w_B\\
w
\end{array}
\right),
\end{align}
where we used $w := (\lambda-P_3^s)^{-1}Y^B w_B$, which exists,
because $\lambda$ does not coincide with an eigenvalue of $P_3^s$.

Now, a matrix which makes
$T_2(s)^{-1}{}_A{}^B P_2^s{}_B{}^C T_2(s){}_C{}^D$ diagonal (and real) is
\begin{align}
T_2(s){}_A{}^B &=
\left(
\begin{array}{cc}
W_A{}^B & 0 \\
(\lambda-P_3^s)^{-1} Y^A W_A{}^B & T_3(s)
\end{array}
\right),\nonumber
\end{align}
where $T_3(s)$ and $W_A{}^B$ diagonalize $P_3^s$ and
$X_A{}^{B}$ respectively. The inverse of
$T_2(s){}_A{}^B$ is
\begin{align}
\nonumber
T_2(s)^{-1}{}{}_A{}^B &=
\left(
\begin{array}{cc}
W^{-1}{}_A{}^B & 0 \\
-T_3(s)^{-1}(\lambda-P_3^s)^{-1} Y^B & T_3(s)^{-1}
\end{array}
\right).
\end{align}
Both, $T_2(s){}_A{}^B$ and its inverse are bounded, because
on the one hand $T_3(s)$ and $T_3(s)^{-1}$ are bounded by
the assumption \eqref{eq:FT3S:FT3S_str_hyp} and we have chosen
$\lambda$ such that $(\lambda-P_3^s)^{-1}$ is bounded as well.

Hence, we get that there exists a constant $K_2>0$ such that
$K_2^{-1} \leq \|T_2(s){}_A{}^B\| \leq K_2$, which shows that FT3S 
strong hyperbolicity implies FT2S strong hyperbolicity.$\,\Box$

\subsection{Why two different reductions?}
\label{sec:diff_reds}

\paragraph*{Failure of the iterative procedure for symmetric 
hyperbolicity:} 
Symmetric hyperbolicity relies fundamentally on conserved 
quantities (we will 
discuss the details of FT3S conservation equations in 
section~\ref{sec:FT2S_red_sym_hyp}). Hence, in order to deal 
with symmetric hyperbolicity for the second order reductions, 
which were used to handle strong hyperbolicity, we need to 
construct a reduction with a conserved quantity that is associated 
to the given FT3S symmetrizer. However, one finds that there 
are FT3S systems with a conserved energy for which no 
FT2S reduction with the same conserved quantity exists. We derive 
such a counterexample explicitly in the notebook 
{\tt counter\_example\_3rd\_order\_sym\_hyp.nb} which is available 
online~\footnote{{\tt http://www.tpi.uni-jena.de/\~{}hild/FTNS.tgz}},
but for brevity do not give details here.\\ \\

\paragraph*{Discussion:}
This situation differs from the case of reductions of FT2S 
systems to first order. There every FT2S symmetrizer implies 
an FT1S conserved energy. In~\cite{GunGar05} this was the basis 
of the proof that for every symmetric hyperbolic FT2S system 
there exists a symmetric hyperbolic first order reduction.
Thus, we cannot use the iterative procedure to prove existence 
of symmetric hyperbolic lower order reductions. In order to avoid this 
problem we employ a direct reduction to first order (described 
in section \ref{sec:FT3S_1st_order_reduction}) and construct a 
conserved quantity for the first order system.\\ \\

\paragraph*{Why not always use the direct reduction?} Conversely, 
one may also think of using the direct first order reduction to
show statements about strong hyperbolicity. There the problem is 
that the proofs rely on the choice of reduction 
parameters~\eqref{eq:barD_epsilon_choice}. For direct first order
reductions the structure of reduction parameters changes completely,
and  we did not find a choice that shows existence
of a strongly hyperbolic direct first order reduction. So we use one class 
of reductions for proofs about strong hyperbolicity, namely 
reductions from FT3S to FT2S, and another class for proofs on 
symmetric hyperbolicity, namely reductions from FT3S to FT1S.

\subsection{Direct reduction to first order}
\label{sec:FT3S_1st_order_reduction}

\paragraph*{Reduction variables:} In analogy to the construction 
of FT2S reductions of the FT3S system~\eqref{eq:FT3S} we now 
define \emph{direct first order reductions} of~\eqref{eq:FT3S}. We 
also use the terminology \emph{direct FT1S reduction}. 
We define reduction variables
$d^u_i = \p_i u, 
d^u_{ij} = \p_{(i} d^u_{j)}$ and $ 
d^v_i = \p_i v$.
The equations of motion which one derives from these definitions 
are
\begin{align}
\nonumber
\p_t d^u_i &=(A^u{}_{u})^j \p_i \p_j u
+(A^u{}_{v}) \p_i v+(B^u{}_{1u}) \p_i u + \p_i s^u,\\
\nonumber
\p_t d^u_{ij} &=(A^u{}_{u})^k \p_i \p_j \p_k u
+(A^u{}_{v}) \p_i \p_j v+(B^u{}_{1u}) \p_i \p_j u
+\p_i \p_j s^u,\\
\nonumber
\p_t d^v_i &= 
(A^v{}_{u})^{jk} \p_i\p_j\p_k u
+(A^v{}_{v})^j \p_i\p_j v
+(A^v{}_{w}) \p_i w
+(B^v{}_{1u})^j\p_i\p_j u
+(B^v{}_{2u}) \p_i u\\
&\quad
+(B^v{}_{1v}) \p_i v+\p_i s^v.
\label{eq:1st_order_red_var_eom}
\end{align}

\paragraph*{Auxiliary constraints:} They are subject to the first order 
auxiliary constraints
\begin{align}
\label{eq:aux_FT3-1S}
\nonumber
c^u_i &= \p_i u - d^u_i, &
\bar c^u_{ij} &= \p_i d^u_j - \p_{(i} d^u_{j)}, &
c^u_{ij} &= \p_{(i} d^u_{j)} - d^u_{ij},\\
\bar c^u_{ijk} &= \p_i d^u_{jk} - \p_{(i} d^u_{jk)}, &
c^v_i &= \p_i v - d^v_i, & 
\bar c^v_{ij} &= \p_i d^v_j - \p_{(i} d^v_{j)}.
\end{align}
We call a first order system which is composed of equations
\eqref{eq:FT3S} and~\eqref{eq:1st_order_red_var_eom} with
additions of linear combinations of the auxiliary constraints
\eqref{eq:aux_FT3-1S} and their derivatives to the right hand sides
a \emph{direct first order reduction} of the FT3S system~\eqref{eq:FT3S}.

\paragraph*{Reduction:} Note that we allow additions of 
derivatives of the auxiliary constraints, but it is not possible 
to add arbitrary derivatives, because the final system must be 
first order. The constraint additions are used to cancel the 
higher order terms in~\eqref{eq:FT3S} 
and~\eqref{eq:1st_order_red_var_eom}.

As in section \ref{sec:2nd_order_reduction} one can show that there 
is a one-to-one relation between solutions of~\eqref{eq:FT3S} and 
the solutions of first order reductions which satisfy the 
auxiliary constraints. The reason is again that the auxiliary 
constraint evolution system in the first order reduction is 
closed. We show this step for arbitrary spatial derivative order 
in section~\ref{section:Reduction}.

The principal part of a first 
order reduction of~\eqref{eq:FT3S} has the form
\begin{align}
\p_t u & \simeq
(D^u{}_u)^{k} c^u_k
+ (D^u{}_u)^{kl} c^u_{kl}
+ (D^u{}_v)^{k} c^v_k
+ (\bar D^u{}_u)^{kl} \bar c^u_{kl}
+ (\bar D^u{}_u)^{klm} \bar c^u_{klm}\nonumber\\
& \quad + (\bar D^u{}_v)^{kl} \bar c^v_{kl}, \nonumber\\
\p_t d^u_{i} & \simeq
(D^u{}_u)_{i}{}^{k} c^u_k
+ (D^u{}_u)_{i}{}^{kl} c^u_{kl}
+ (D^u{}_v)_{i}{}^{k} c^v_k
+ (\bar D^u{}_u)_{i}{}^{kl} \bar c^u_{kl}
+ (\bar D^u{}_u)_{i}{}^{klm} \bar c^u_{klm}\nonumber\\
& \quad + (\bar D^u{}_v)_{i}{}^{kl} \bar c^v_{kl},\nonumber\\
\p_t v & \simeq (D^v{}_u)^{k} c^u_k+ (D^v{}_u)^{kl} c^u_{kl}
+ (D^v{}_v)^{k} c^v_k + (\bar D^v{}_u)^{kl} \bar c^u_{kl}
+ (\bar D^v{}_u)^{klm} \bar c^u_{klm}\nonumber\\
& \quad + (\bar D^v{}_v)^{kl} \bar c^v_{kl},\nonumber\\
\p_t d^u_{ij} &\simeq
(A^u{}_{u})^k \p_{(i} d^u_{j)k}
+(A^u{}_{v}) \p_{(i} d^v_{j)}
+ (D^u{}_u)_{ij}{}^{k} c^u_k
+ (D^u{}_u)_{ij}{}^{kl} c^u_{kl}
+ (D^u{}_v)_{ij}{}^{k} c^v_k\nonumber\\
& \quad
+ (\bar D^u{}_u)_{ij}{}^{kl} \bar c^u_{kl}
+ (\bar D^u{}_u)_{ij}{}^{klm} \bar c^u_{klm}
+ (\bar D^u{}_v)_{ij}{}^{kl} \bar c^v_{kl},\nonumber\\
\p_t d^v_i & \simeq (A^v{}_{u})^{jk} \p_i d^u_{jk}
+(A^v{}_{v})^j \p_i d^v_j
+(A^v{}_{w}) \p_i w
+ (D^v{}_u)_{i}{}^{k} c^u_k
+ (D^v{}_u)_{i}{}^{kl} c^u_{kl}\nonumber\\
& \quad
+ (D^v{}_v)_{i}{}^{k} c^v_k
+ (\bar D^v{}_u)_{i}{}^{kl} \bar c^u_{kl}
+ (\bar D^v{}_u)_{i}{}^{klm} \bar c^u_{klm}
+ (\bar D^v{}_v)_{i}{}^{kl} \bar c^v_{kl},\nonumber\\
\nonumber \p_t w & \simeq
(A^w{}_{u})^{ijk} \p_i d^u_{jk}
+(A^w{}_{v})^{ij} \p_i d^v_j
+(A^w{}_{w})^{i} \p_i w
+ (D^w{}_u)^{k} c^u_k
+ (D^w{}_u)^{kl} c^u_{kl}\\
& \quad
+ (D^w{}_v)^{k} c^v_k
+ (\bar D^w{}_u)^{kl} \bar c^u_{kl}
+ (\bar D^w{}_u)^{klm} \bar c^u_{klm}
+ (\bar D^w{}_v)^{kl} \bar c^v_{kl},
\label{eq:FT3S_1st_order_red}
\end{align}
where the constant matrices $(D^X{}_Y)$ and $(\bar D^X{}_Y)$ ($X,Y = u,v,w$)
are the reduction parameters.

Since the reduction parameters are contracted
with auxiliary constraints and the symmetric part of the $\bar c$ vanishes
we assume without loss of generality that the~$\bar D$ symmetrized in the 
upper indices vanish:
\begin{align}
\label{eq:FT3-1S:Dbar_sym}
\nonumber
(\bar D^X{}_Y)^{(kl)} &= 0, &
(\bar D^X{}_Y)^{(klm)} &= 0,& 
(\bar D^X{}_Y)_{i}{}^{(kl)} &= 0, \\
(\bar D^X{}_Y)_{i}{}^{(klm)} &= 0, &
(\bar D^X{}_Y)_{ij}{}^{(kl)} &= 0, &
(\bar D^X{}_Y)_{ij}{}^{(klm)} &= 0.
\end{align}
Moreover, since~$d_{ij}^u=d_{(ij)}^u$ the reduction variables
satisfy
\begin{align}
\nonumber
(D^u{}_u)^{kl} &= (D^u{}_u)^{(kl)}, &
(D^u{}_u)_{i}{}^{kl} &= (D^u{}_u)_{i}{}^{(kl)}, &
(D^v{}_u)^{kl} &= (D^v{}_u)^{(kl)}, \\
\nonumber
(D^v{}_u)_{i}{}^{kl} &= (D^v{}_u)_{i}{}^{(kl)}, &
(D^w{}_u)^{kl} &= (D^w{}_u)^{(kl)}, &
(\bar D^u{}_u)^{klm} &= (\bar D^u{}_u)^{k(lm)}, \\
\nonumber
(\bar D^u{}_u)_{i}{}^{klm} &= (\bar D^u{}_u)_{i}{}^{k(lm)}, &
(\bar D^v{}_u)^{klm} &= (\bar D^v{}_u)^{k(lm)}, &
(\bar D^v{}_u)_{i}{}^{klm} &= (\bar D^v{}_u)_{i}{}^{k(lm)}, \\
\nonumber
(\bar D^w{}_u)^{klm} &= (\bar D^w{}_u)^{k(lm)}, &
(\bar D^u{}_u)_{ij}{}^{klm} &= (\bar D^u{}_u)_{(ij)}{}^{k(lm)}, &
(\bar D^u{}_u)_{ij}{}^{kl} &= (\bar D^u{}_u)_{(ij)}{}^{kl}, \\
\nonumber
(D^u{}_u)_{ij}{}^{kl} &= (D^u{}_u)_{(ij)}{}^{(kl)}, &
(D^u{}_u)_{ij}{}^{k} &= (D^u{}_u)_{(ij)}{}^{k}, &
(D^u{}_v)_{ij}{}^{k} &= (D^u{}_v)_{(ij)}{}^{k}.
\end{align}
In a representation with the state vector
$
\left(
u, d_i^u, v, d_{ij}^u, d_{i}^v, w
\right)
$
the principal matrix of the system~\eqref{eq:FT3S_1st_order_red} is
\begin{align}
\label{eq:FT3S_first_order_PMat}
&\mathcal A_1^{p}{}_{ij}{}^{kl} =\\
\nonumber &\left(
\begin{array}{cccccc}
(D^u{}_u)^p &
(\tilde D^u{}_u)^{pk} &
(D^u{}_v)^p &
(\bar D^u{}_u)^{pkl} &
(\bar D^u{}_v)^{pk} &
0 \\
(D^u{}_u)_{i}{}^p &
(\tilde D^u{}_u)_{i}{}^{pk} &
(D^u{}_v)_{i}{}^p &
(\bar D^u{}_u)_{i}{}^{pkl} &
(\bar D^u{}_v)_{i}{}^{pk} &
0 \\
(D^v{}_u)^p &
(\tilde D^v{}_u)^{pk} &
(D^v{}_v)^p &
(\bar D^v{}_u)^{pkl} &
(\bar D^v{}_v)^{pk} &
0 \\
(D^u{}_u)_{ij}{}^p &
(\tilde D^u{}_u)_{ij}{}^{pk} &
(D^u{}_v)_{ij}{}^p &
(A^u{}_u)^{(k}\delta^{l)}_{(i}\delta^p_{j)}+(\bar D^u{}_u)_{ij}{}^{pkl} &
A^u{}_v\delta^p_{(i}\delta^k_{j)}+(\bar D^u{}_v)_{ij}{}^{pk} &
0 \\
(D^v{}_u)_{i}{}^p &
(\tilde D^v{}_u)_{i}{}^{pk} &
(D^v{}_v)_{i}{}^p &
(A^v{}_u)^{kl}\delta^p_i+(\bar D^v{}_u)_{i}{}^{pkl} &
(A^v{}_v)^{k}\delta^p_i+(\bar D^v{}_v)_{i}{}^{pk} &
(A^v{}_w)\delta^{p}_i \\
(D^w{}_u)^p &
(\tilde D^w{}_u)^{pk} &
(D^w{}_v)^p &
(A^w{}_u)^{pkl} + (\bar D^w{}_u)^{pkl} &
(A^w{}_v)^{pk} + (\bar D^w{}_v)^{pk} &
(A^w{}_w)^{p}
\end{array}
\right),
\end{align}
where $(\tilde D^X{}_Y)_{*}{}^{\star} := (\bar D^X{}_Y)_{*}{}^{\star} 
+ (D^X{}_Y)_{*}{}^{\star}$.

\subsection{Symmetric hyperbolicity}
\label{sec:FT2S_red_sym_hyp}

\paragraph*{Definitions of symmetric hyperbolicity:} Now we 
show that the following definitions of third order symmetric 
hyperbolicity are equivalent
\setcounter{subdefi}{1}
\renewcommand{\thedefi}{\arabic{defi}\alph{subdefi}}
\begin{defi}
\label{def:sym_hyp_FT3S_red}
The FT3S system~\eqref{eq:FT3S} is called first order
symmetric hyperbolic if
there exists a first order reduction which is symmetric 
hyperbolic in the usual first order sense~\cite{GusKreOli95}.
That is, there exists a choice of reduction parameters
and a Hermitian matrix $H_1^{ij\,kl} = H_1^{(ij)\,(kl)}$ which is
positive definite in the space of symmetric tensors such 
that~$H_1^{ij\,kl}\mathcal A_1^{p}{}_{kl}{}^{mn}$ is Hermitian for 
all $p$.
\end{defi}
\addtocounter{defi}{-1}
\addtocounter{subdefi}{1}
The matrix $H_1^{ij\,kl}$ is symmetric in $(i,j)$ and $(k,l)$,
because we defined the reduction variable $d^u_{ij}$ symmetric.
\begin{defi}
\label{def:FT3S_sym_hyp_direct}
The FT3S system~\eqref{eq:FT3S} is called FT3S symmetric hyperbolic if
there exists a Hermitian matrix $H_3^{ij\,kl}=H_3^{(ij)\,(kl)}$ which
is positive definite in the space of symmetric tensors such 
that~$S_{ij} H_3^{ij\,kl} \mathcal A_3^{p}{}_{kl}{}^{mn} s_p S_{mn}$
is Hermitian for every spatial vector $s$.
\end{defi}
\renewcommand{\thedefi}{\arabic{defi}}
As before we denote a positive definite Hermitian matrix
$H_3^{ij\,kl} = H_3^{(ij)\,(kl)}$ which makes the above product
Hermitian a \emph{symmetrizer}.
If~$H_3^{ij\,kl}$ makes the product
Hermitian, but is not necessarily positive
definite then we call it a \emph{candidate symmetrizer}.
It is straightforward to check that given an FT3S 
symmetrizer~$H_3^{ij\,kl}$ the energy
\begin{align}
\nonumber
E &= \int d^3x\, \epsilon = \int d^3x\, u_{ij}^\dag H_3^{ij\,kl} u_{ij}
\end{align}
is conserved up to non principal terms, i.e. $\p_t E \simeq 0$.

\paragraph*{Def. \ref{def:sym_hyp_FT3S_red} $\Rightarrow$ 
Def. \ref{def:FT3S_sym_hyp_direct}:}
Given an FT3S system which satisfies definition \ref{def:sym_hyp_FT3S_red}
there exist, according to the usual definition of symmetric hyperbolicity for
first order systems~\cite{GusKreOli95}, reduction parameters, $D$ and 
$\bar D$, and a matrix $H_1^{ij\,kl} = H_1^{(ij)\,(kl)}$ such that the 
product $H_1^{ij\,kl} \mathcal A_1^{p}{}_{kl}{}^{mn}$
is Hermitian for every $p$. Moreover the matrix $H_1^{ij\,kl}$ is
positive definite in the space of symmetric tensors.

Using the state vector as above both $H_1^{ij\,kl}$ and 
$\mathcal A_1^{p}{}_{kl}{}^{mn}$ are decomposed into $6\times 6$ blocks. 
By grouping the variables as~$~\left(
u, d_{i}^u, v \,\, | \,\,
d_{ij}^u, d_{i}^v, w
\right)$ we identify four $3\times 3$ sub matrices in
$H_1$ and $\mathcal A_1^p$, where $H_1$ has the form
\begin{align}
\label{eq:2x2_decomp_H1}
H_1^{ij\,kl} &=
\left(
\begin{array}{cc}
H_{11}^{i\,k} & H_{12}^{i\,kl} \\
H_{21}^{ij\,k} & H_{22}^{ij\,kl}
\end{array}
\right).
\end{align}

We are now interested in the lower right~$3\times 3$ sub matrix
of the product of $H_1$ and $\mathcal A_1^p$.
It turns out that this sub matrix
contains the FT3S conservation equation, i.e. the condition 
that~$S_{ij} H_3^{ij\,kl} \mathcal A_3^{p}{}_{kl}{}^{mn} s_p S_{mn}$
is Hermitian.

The lower right block of the product
$H_1^{ij\,kl} \mathcal A_1^{p}{}_{kl}{}^{mn}$ is
\begin{align}
\label{eq:lower_3x3_Ft1S}
&H_{21}^{ij\,k}
\left(
\begin{array}{ccc}
(\bar D^u{}_u)^{pmn} &
(\bar D^u{}_v)^{pm} &
0 \\
(\bar D^u{}_u)_{k}{}^{pmn} &
(\bar D^u{}_v)_{k}{}^{pm} &
0 \\
(\bar D^v{}_u)^{pmn} &
(\bar D^v{}_v)^{pm} &
0
\end{array}
\right)\\
\nonumber
&+H_{22}^{ij\,kl}
\left(
\begin{array}{ccc}
(A^u{}_u)^{(m}\delta^{n)}_{(k}\delta^p_{l)}+(\bar D^u{}_u)_{kl}{}^{pmn} &
A^u{}_v\delta^p_{(k}\delta^m_{l)}+(\bar D^u{}_v)_{kl}{}^{pm} &
0 \\
(A^v{}_u)^{mn}\delta^p_k+(\bar D^v{}_u)_{k}{}^{pmn} &
(A^v{}_v)^{m}\delta^p_k+(\bar D^v{}_v)_{k}{}^{pm} &
(A^v{}_w)\delta^{p}_k \\
(A^w{}_u)^{pmn} + (\bar D^w{}_u)^{pmn} &
(A^w{}_v)^{pm} + (\bar D^w{}_v)^{pm} &
(A^w{}_w)^{p}
\end{array}
\right).
\end{align}
By assumption this matrix is Hermitian, because 
it is a quadratic
subblock on the diagonal of the Hermitian matrix
$H_1^{ij\,kl} \mathcal A_1^{p}{}_{kl}{}^{mn}$.

Furthermore, when we contract the index $p$ in~\eqref{eq:lower_3x3_Ft1S}
with an arbitrary
spatial vector $s_p$ and the full matrix from the left and right with
$S_{ij}$ and $S_{mn} = \mbox{diag}(s_ms_n,s_m,1)$ respectively then the 
result is still Hermitian, because $S_{ij}$ and $S_{mn}$ are Hermitian.

Using the fact that the symmetrization of the reduction parameters
$\bar D$ in all upper indices vanishes according to
\eqref{eq:FT3-1S:Dbar_sym} it follows that all terms in
\eqref{eq:lower_3x3_Ft1S} that
contain reduction parameters vanish after the contractions with
$s_p$, $S_{ij}$ and $S_{mn}$. The remaining terms are
\begin{align}
\nonumber
S_{ij}&H_{22}^{ij\,kl}
\left(
\begin{array}{ccc}
(A^u{}_u)^{(k}\delta^{l)}_{(m}\delta^p_{n)} &
A^u{}_v\delta^p_{(k}\delta^m_{l)} &
0 \\
(A^v{}_u)^{mn}\delta^p_k &
(A^v{}_v)^{m}\delta^p_k &
(A^v{}_w)\delta^{p}_k \\
(A^w{}_u)^{pmn} &
(A^w{}_v)^{pm} &
(A^w{}_w)^{p}
\end{array}
\right)s_p S_{mn}
=S_{ij}H_{22}^{ij\,kl}\mathcal A_3^p{}_{kl}{}^{mn} s_p S_{mn}.
\end{align}
It is clear that $H_{22}^{ij\,kl} = H_{22}^{(ij)\,(kl)}$,
and since it is a principal minor of the positive definite
$H_{1}^{ij\,kl}$ it is positive definite as well.
With the identification~$H_{3}^{ij\,kl} = H_{22}^{ij\,kl}$
this shows that the FT3S system is symmetric hyperbolic in the sense
of definition \ref{def:FT3S_sym_hyp_direct}.

\paragraph*{Def. \ref{def:FT3S_sym_hyp_direct} $\Rightarrow$ 
Def. \ref{def:sym_hyp_FT3S_red}:} Given a 
matrix~$H_3^{ij\,kl} = H_3^{(ij)\,(kl)}$ which 
satisfies~$S_{ij} H_3^{ij\,kl} \mathcal A_3^{p}{}_{kl}{}^{mn} s_p S_{mn}$ Hermitian, 
we now construct a symmetric hyperbolic first order reduction of~\eqref{eq:FT3S}. 
At first it is convenient to make a partial choice of the 
reduction parameters such that the first three rows and columns 
of~\eqref{eq:FT3S_first_order_PMat} vanish. This is achieved by
choosing all reduction parameters~$D^X{}_Y=0$ ($X,Y = u,v,w$)
and in addition
\begin{align}
\label{eq:FT3S_1st_order_partial_choice}
\nonumber
(\bar D^u{}_u)^{pk} &= 0, &
(\bar D^u{}_u)_{i}{}^{pk} &= 0, &
(\bar D^v{}_u)^{pk} &= 0, &
(\bar D^u{}_u)_{ij}{}^{pk} &= 0, \\
\nonumber
(\bar D^v{}_u)_{i}{}^{pk} &= 0, &
(\bar D^w{}_u)^{pk} &= 0, &
(\bar D^u{}_u)^{pkl} &= 0, &
(\bar D^u{}_u)_{i}{}^{pkl} &= 0, \\
(\bar D^v{}_u)^{pkl} &= 0, &
(\bar D^u{}_v)^{pk} &= 0, &
(\bar D^u{}_v)_{i}{}^{pk} &= 0, &
(\bar D^v{}_v)^{pk} &= 0.
\end{align}
The next step is to make the ansatz
\begin{align}
\label{eq:FT3S_H1_Ansatz}
H_1^{ij\,kl} &=
\left(
\begin{array}{cc}
\Gamma^{ik} & 0 \\
0 & H_3^{ij\,kl}
\end{array}
\right)
=
\left(
\begin{array}{cccc}
\mathbf{1} & 0 & 0 & 0 \\
0 & \gamma^{ik} & 0 & 0 \\
0 & 0 & \mathbf{1} & 0 \\
0 & 0 & 0 & H_3^{ij\,kl}
\end{array}
\right),
\end{align}
where the $2\times 2$ decomposition here is to be 
understood in the same sense as in~\eqref{eq:2x2_decomp_H1}. 
Obviously this matrix is positive definite when~$H_3^{ij\,kl}$ is.
Hence, what needs to be shown with this ansatz is that
the remaining reduction parameters can be chosen such 
that~$S_{ij} H_3^{ij\,kl} \mathcal A_3^{p}{}_{kl}{}^{mn} s_p S_{mn}$ is 
Hermitian for all $p$.

We define
\begin{align}
\label{eq:def_FT3S_J}
J^{p\,ij\,mn} &:=
H_3^{ij\,kl}
\left(
\begin{array}{ccc}
(\bar D^u{}_u)_{kl}{}^{pmn} &
(\bar D^u{}_v)_{kl}{}^{pm} &
0 \\
(\bar D^v{}_u)_{k}{}^{pmn} &
(\bar D^v{}_v)_{k}{}^{pm} &
0 \\
(\bar D^w{}_u)^{pmn} &
(\bar D^w{}_v)^{pm} &
0
\end{array}
\right)
\end{align}
and~$T^{p\,ij\,nm} := H_3^{ij\,kl} \mathcal A_3^p{}_{kl}{}^{mn}$.

Since the form of $\mathcal A_1^{p}{}_{ij}{}^{kl}$ and $H_1^{ij\,kl}$
has been simplified by taking the partial 
choice~\eqref{eq:FT3S_1st_order_partial_choice} and the 
ansatz~\eqref{eq:FT3S_H1_Ansatz} respectively we only need to show that 
there exist reduction parameters such that the matrix 
\begin{align}
\label{eq:FT3S_1st_order_herm_cond}
J^{p\,ij\,mn} + T^{p\,ij\,nm}
\end{align}
is Hermitian for all $p$. In this equation $T^{p\,ij\,nm}$ is fixed
because we assume an FT3S system with given symmetrizer.

The condition that $H_3^{ij\,kl}$ is a candidate symmetrizer is
equivalent to
$T^{(p\,ij\,nm)} = T^{\dag\,(p\,ij\,nm)}$,
because for all tensors $X^{p\,ij\,kl}$ the equivalence:~$X^{(p\,ij\,kl)} = 0
\Leftrightarrow s_ps_{i}s_{j}X^{p\,ij\,kl}s_{k}s_{l}=0\;\forall s$ holds.

Now we need to find an appropriate $J^{p\,ij\,nm}$.
In order to be able to solve~\eqref{eq:def_FT3S_J}
for the reduction parameters it needs to satisfy certain
symmetries:
\begin{align}
\label{eq:FT3S_J_symmetries}
J^{(p|\,ij\,|kl)} &= 0, &
J^{p\,ij\,kl} &= J^{p\,(ij)\,(kl)}.
\end{align}
Note that $J^{(p|\,ij\,|kl)}=0$ implies that the
last column of $J^{p\,ij\,kl}$ vanishes.

One can prove the existence of a $J^{p\,ij\,kl}$ which
satisfies~\eqref{eq:FT3S_J_symmetries} 
and makes~\eqref{eq:FT3S_1st_order_herm_cond} Hermitian
by construction. With the definition
$V^{p\,ij\,kl} := T^{p\,ij\,kl} - T^{\dag p\,kl\,ij}$
the condition that~\eqref{eq:FT3S_1st_order_herm_cond}
is Hermitian becomes
\begin{align}
\label{eq:FT3S_J_eq}
J^{p\,ij\,kl} - J^{\dag p\,kl\,ij} &= -V^{p\,ij\,kl}.
\end{align}

In the {\tt Mathematica} notebook {\tt flux\_construction.nb} 
accompanying the paper~\footnote{\tt http://www.tpi.uni-jena.de/\~{}hild/FTNS.tgz}
we show that using the ansatz,
\begin{align}
J^{p\,ij\,kl} &= \sum_{\pi\in S_5} x_\pi V^{\pi(p)\,\pi(i)\pi(j)\,\pi(k)\pi(l)}\nonumber
\end{align}
the system~\eqref{eq:FT3S_J_symmetries},\eqref{eq:FT3S_J_eq}
becomes a linear system on the $x_\pi$, which can be
solved if $V^{(p\,ij\,kl)}=0$.

The latter condition is satisfied by assumption. Hence, 
multiplication of the resulting $J^{p\,ij\,kl}$ from the left by $H_3^{-1}$
(which exists, because $H_3$ is positive definite)
shows that there exists a first order reduction which is symmetric hyperbolic
and has the symmetrizer~\eqref{eq:FT3S_H1_Ansatz}. $\Box$

\section{Higher order systems}
\label{section:Higher_order}

In the following sections we extend the notions of strong and
symmetric hyperbolicity to a certain type of higher order
in space systems. As Gundlach and Mart\'in-Garc\'ia 
in~\cite{GunGar05} we do not consider the most general first 
order in time, $N$th order in space system, but rather the subset 
for which a first order reduction exists. Here we describe 
these systems and establish our notation.

\subsection{FTNS systems}

\paragraph*{Notation:} We start by describing the notation 
that we use to present FT$N$S systems efficiently.
The equations of motion will be given for fields $v^\mu$,
where $v^\mu$ denotes a vector of fields which can appear
at most $N-\mu$ times differentiated in the FT$N$S system.
For reasons that will become clear later we also denote
fields with that property \emph{variables with $\mu$
implicit derivatives}.
To denote derivatives acting on $v^\nu$ we define for
$\mu=0,\dots,N-1$, $\nu=0,\dots,\mu$ and $\rho=1,\dots,\mu-\nu$ 
operators
\begin{align}
\hat A^\mu{}_\nu &:= (A^\mu{}_\nu)^{i_1\dots i_{\mu-\nu+1}}\p_{i_1\dots i_{\mu-\nu+1}},&\quad
\hat B^\mu{}_{\rho\,\nu} := ( B^\mu{}_{\rho\,\nu})^{i_1\dots i_{\mu-\nu-\rho+1}}\p_{i_1\dots i_{\mu-\nu-\rho+1}},
\nonumber
\end{align}
with constant matrices $(A^\mu{}_\nu)^{i_1\dots i_{\mu-\nu+1}}$ and
$(B^\mu{}_{\rho\,\nu})^{i_1\dots i_{\mu-\nu-\rho+1}}$.
Since the number of ``derivative indices'' (the indices denoted by lower
case Latin letters) in these matrices is
fixed through $\mu$, $\nu$ and $\rho$ we also use the abbreviations
\begin{align}
(A^\mu{}_\nu{})^{\ul{i}} &:=
(A^\mu{}_\nu{})^{i_1\dots i_{\mu-\nu+1}},&\quad
(A^\mu{}_\nu{})^{i_1\dots i_\sigma\ul{j}} :=
(A^\mu{}_\nu{})^{i_1\dots i_\sigma j_1\dots j_{\mu-\nu-\sigma+1}},\nonumber\\
(A^\mu{}_{\nu})^{i_1\dots i_\sigma\ul{i}} &:=
(A^\mu{}_{\nu})^{i_1\dots i_{\sigma} i_{\sigma+1}\dots i_{\mu-\nu+1}},\nonumber
\end{align}
i.e. an underlined lower case Latin letter means ``fill in
an appropriate number of derivative indices''. Analog notations
are used for the other objects that appear here.
The fields $v^\mu$ may also appear undifferentiated,
i.e. in the form $A^\mu{}_{\mu+1}v^{\mu+1}$ or
$B^\mu{}_{({\mu-\nu+1})\,\nu} v^\nu$. For efficiency
we use the same notation in that case:
\begin{align}
\hat A^{\mu}{}_{\mu+1}v^{\mu+1}
&:=A^{\mu}{}_{\mu+1}v^{\mu+1}=:(A^{\mu}{}_{\mu+1})^{\ul{i}}\p_{\ul{i}}v^{\mu+1},\nonumber\\
\hat B^{\mu}{}_{({\mu-\nu+1})\,\nu}v^\nu
&:=B^{\mu}{}_{({\mu-\nu+1})\,\nu}v^\nu
=:(B^{\mu}{}_{({\mu-\nu+1})\,\nu})^{\ul{i}}\p_{\ul{i}}v^\nu.
\end{align}

\paragraph*{Evolution equations:} We define an FT$N$S system 
as a system of equations of the form
\begin{align}
\p_tv^\mu &= \sum_{\nu=0}^{\mu+1}\hat A^\mu{}_\nu v^{\nu}
+\sum_{\nu=0}^{\mu}\sum_{\rho=1}^{\mu-\nu+1}\hat B^\mu{}_{\rho\,\nu} v^\nu+s^\mu\nonumber\\
\p_tv^{N-1} &= \sum_{\nu=0}^{N-1}\hat A^{N-1}{}_\nu v^{\nu}
+\sum_{\nu=0}^{{N-1}}\sum_{\rho=1}^{N-\nu}\hat B^{N-1}{}_{\rho\,\nu} v^\nu
+s^{N-1},\label{eq:system_FTNS}
\end{align}
with $\mu=0,\dots,N-2$ and source terms $s^\mu$, $s^{N-1}$
(the source terms do not contain the $v^\mu$). Note that FT$2$S
systems are the first order in time, second order in space systems
treated in~\cite{GunGar05} and FT$1$S systems are
fully first order systems. If we consider the equation of motion 
for~$v^\mu$ in~\eqref{eq:system_FTNS} then the left hand side,
$\p_t v^\mu$, is a first order derivative and in the right hand
side the highest derivative acting on $v^\nu$ has order $\mu-\nu+1$.
If we consider $v^\mu$ as a variable which contains $\mu$ derivatives
implicitly then the counting of derivatives gives at both sides $\mu+1$.
Therefore it is helpful to think of the $v^\mu$ in that way, which
explains our terminology.

\paragraph*{Principal part:} We will see that one can define 
strong and symmetric hyperbolicity of FT$N$S systems through 
the coefficients of the highest order derivatives 
in~\eqref{eq:system_FTNS}. Therefore we call
$\p_tv^\mu \simeq \sum_{\nu=0}^{\mu+1}\hat A^\mu{}_\nu v^{\nu},$ 
and~$\p_tv^{N-1}\simeq \sum_{\nu=0}^{N-1}\hat A^{N-1}{}_\nu v^{\nu}$,
with $\mu=0,\dots,N-2$ the \emph{principal part of the FT$N$S system}.
Furthermore we denote the matrix
\begin{align}
\nonumber
\mathcal A_N{}^p{}_{\ul{i}}{}^{\ul{j}} &=
\left(
(\Delta^N_{\mu\nu})^{p(\ul{j})}_{(\ul{i})\ul{k}} (\tilde
A^{\mu}{}_{\nu})^{\ul{k}}
\right)_{\mu=0,\dots,N-1}^{\nu=0,\dots,N-1},
\end{align}
with $(\ul{i})$ meaning symmetrization and
\begin{align}
\label{eq:FTNS_aux_defs}
(\tilde A^{\mu}{}_{\nu})^{\ul{i}} &:=
\left\{
\begin{array}{cc}
(A^{\mu}{}_{\nu})^{\ul{i}} & \mbox{ for }\nu\leq\mu+1\\
0 & \mbox{ for }\nu>\mu+1\;,
\end{array}
\right.\\
(\Delta^N_{\mu\nu})^{\ul{j}}_{\ul{i}\ul{k}} &:=
\left\{
\begin{array}{l}
\delta_{i_1}^{j_1}\dots\delta_{i_{N-\mu-1}}^{j_{N-\mu-1}}
\delta_{k_1}^{j_{N-\mu}}\dots\delta_{k_{\mu-\nu+1}}^{j_{N-\nu}}\\
\qquad \qquad \qquad \qquad \mbox{for } \mu\leq N-2, \nu\leq\mu\\
\delta_{i_1}^{j_1}\dots\delta_{i_{N-\mu-1}}^{j_{N-\mu-1}}
\quad \mbox{for } \mu\leq N-2, \nu=\mu+1\\
\delta_{k_1}^{j_{1}}\dots\delta_{k_{N-\nu}}^{j_{N-\nu}}
\quad \mbox{for } \mu = N-1, \nu\leq N-1\\
0 \quad \mbox{for } \nu>\mu+1
\end{array}
\right.\nonumber
\end{align}
for $\mu,\nu\leq N-1$ the \emph{principal matrix of the FT$N$S system}.
In the variables $u_{\ul{i}}=(\p_{i_1}\dots\p_{i_{N-\mu-1}}v^\mu)_{\mu=0,\dots,N-1}$
the principal part of the FT$N$S system can be written as
$\p_tu_{\ul{i}}\simeq \mathcal A_N{}^p{}_{\ul{i}}{}^{\ul{j}}
\p_pu_{\ul{j}}$.

\paragraph*{Principal symbol:} The \emph{principal symbol} of 
the FT$N$S system~\eqref{eq:system_FTNS} is $P_N^s = 
S^{N\,\ul{i}}\mathcal A_N{}^p{}_{\ul{i}}{}^{\ul{j}}s_p S^N_{\ul{j}}$,
where~$S^N_{\ul{j}}=
\mbox{diag}(s_{j_1}\dots s_{j_{N-1}},s_{j_1}\dots s_{j_{N-2}},
\dots,s_{j_1},1)$.

\section{Higher order strong hyperbolicity}
\label{section:FTNS_strong_hyp}

In this section we consider strong hyperbolicity of FTNS systems.
In analogy to the case of FT3S systems we introduce an iterative
reduction procedure, FT$N$S $\rightarrow$ FT$(N-1)$S $\rightarrow$
$\dots$ $\rightarrow$ FT1S and use this to define
strong hyperbolicity for FT$N$S systems without referring
to the reduction.

\subsection{Reduction to order $(N-1)$}
\label{sec:N-1_reduction}

\paragraph*{Reduction variables and auxiliary constraints:}
We begin with the description of reductions to order $(N-1)$.
The starting point is the FT$N$S system~\eqref{eq:system_FTNS}.

Using the same procedure that was described 
in detail for FT3S systems in section \ref{sec:2nd_order_reduction} 
we construct FT$(N-1)$S reductions of~\eqref{eq:system_FTNS}.
We define the reduction variables~$d_i := \p_i v^0$ 
and derive from~\eqref{eq:system_FTNS} their equation of motion:
\begin{align}
\label{eq:FTN-1S_red_var_EoM}
\p_t d_i &= (A^0{}_0)^j \p_i\p_j v^{0}+A^0{}_1 \p_i v^{1}
+\hat B_{1}^0{}_0 \p_i v^0+\p_i s^0.
\end{align}
The auxiliary constraints introduced with the new reduction
variable are
\begin{align}
c_i &:= \p_i v^0 - d_i,\quad\quad
c_{i_1\dots i_\sigma} :=
\p_{i_1}\dots\p_{i_{\sigma-1}} d_{i_\sigma} - \p_{(i_1}\dots\p_{i_{\sigma-1}} d_{i_\sigma)}.
\label{eq:FTN-1_aux_cnstr}
\end{align}
One can show that for $\sigma>2$ the constraints $c_{i_1\dots i_\sigma}$
can be written as linear combinations of derivatives of the
$c_{ij}$. The proof can be done through induction with the induction step
\begin{align}
\label{eq:FTN-1S_redundant_cnstr}
c_{i_1\dots i_\sigma} &=
\frac1\sigma \sum_{\mu=1}^{\sigma-1}
\p_{i_\mu}c_{i_1\dots i_{\rho-1}i_{\rho-2}\dots i_{\sigma}}
+\frac2{\sigma (\sigma-1)}\sum_{\nu=1}^{\sigma-1}
\p_{i_1}\dots\p_{i_{\nu-1}}\p_{i_{\nu+1}}\dots\p_{i_\sigma} c_{i_\mu i_\sigma}.
\end{align}

\paragraph*{FT$(N-1)$S reduction:} In analogy to 
section~\ref{sec:2nd_order_reduction} we come to FT$(N-1)$S 
reductions by adding the constraints~$c_i$ and~$c_{ij}$ as well as
their derivatives to
~\eqref{eq:system_FTNS} and~\eqref{eq:FTN-1S_red_var_EoM}.
If we restrict to those constraint 
additions which appear in the resulting FT$(N-1)$S principal part 
then we get the following class of FT$(N-1)$S systems
\begin{align}
\p_t v^0&=
(A^0{}_0)^k \p_k v^0 + (A^0{}_1) v^1 + (B^0{}_{10})v^0+s^0
+(D^0)^k c_k + (\bar D^0)^{kj} c_{kj},\nonumber\\
\p_t d_i&=
(A^0{}_0)^j\p_i d_j + A^0{}_1 \p_i v^1 + B^0{}_{10}\p_i v^0 + \p_i s^0
+(D)_i{}^k c_k + (\bar D)_i{}^{kj} c_{kj},\nonumber\\
\nonumber
\p_t v^\mu&=
(A^\mu{}_0)^{k_1\dots k_{\mu+1}} \p_{k_1}\dots \p_{k_\mu} d_{k_{\mu+1}}
+ \sum_{\nu=1}^{\mu+1}\hat A^\mu{}_\nu v^\nu
+ \sum_{\nu=0}^{\mu}\sum_{\rho=1}^{\mu-\nu+1}\hat B^\mu{}_{\rho\nu}v^\nu+s^\mu
\nonumber\\
&\quad+(D^\mu)^{k_1\dots k_{\mu}} \p_{k_1}\dots \p_{k_{\mu-1}} c_{k_\mu}
+ (\bar D^\mu)^{k_1\dots k_{\mu+1}} \p_{k_1}\dots \p_{k_{\mu-1}}c_{k_{\mu}k_{\mu+1}},
\nonumber\\
\nonumber
\p_t v^{N-1} &=
(A^{N-1}{}_0)^{k_1\dots k_{N}} \p_{k_1}\dots \p_{k_{N-1}} d_{k_{N}}+s^{N-1}
+ \sum_{\nu=1}^{N-1}\hat A^{N-1}{}_\nu v^\nu\\
\nonumber
&\quad+ \sum_{\nu=0}^{{N-1}}\sum_{\rho=1}^{N-\nu}\hat B^{N-1}{}_{\rho\nu}v^\nu
+(D^{N-1})^{k_1\dots k_{{N-1}}} \p_{k_1}\dots \p_{k_{N-2}} c_{k_{N-1}},\nonumber\\
&\quad+(\bar D^{N-1})^{k_1\dots k_{N}} \p_{k_1}\dots \p_{k_{N-2}}c_{k_{{N-1}}k_{N}}
\label{eq:N-1-reduction}
\end{align}
where $\mu=1,\dots,N-2$ and the matrices denoted~$D$ and~$\bar D$ 
are the reduction parameters. Due to the antisymmetry of $c_{ij}$ 
one can assume without loss of generality that the~$\bar D$ are 
antisymmetric in the last two indices. By applying this reduction 
procedure $(N-1)$ times we finally arrive at an FT1S system.\\ \\

\paragraph*{Auxiliary constraint evolution:}
By construction it is clear that there is a one-to-one correspondence 
between the solutions of~\eqref{eq:system_FTNS} and the solutions 
of~\eqref{eq:N-1-reduction} which satisfy the auxiliary
constraints~\eqref{eq:FTN-1_aux_cnstr}. The reason is that the 
constraint evolution system is closed:
\begin{align}
\p_t c_i &=
\left(
(A^0{}_0)^k + (D^0)^k
\right)\p_i c_k
+D_i{}^k c_k +(\bar D^0)^{kj} \p_i c_{kj}
+\bar D_i{}^{kj} c_{kj},\nonumber\\
\p_t c_{ij} &=
D_{[j}{}^k \p_{i]} c_k
+\bar D_{[j}{}^{kl} \p_{i]} c_{kl}.
\label{eq:FTN-1S_cnstr_evol}
\end{align}
Having~\eqref{eq:FTN-1S_redundant_cnstr} and~\eqref{eq:FTN-1S_cnstr_evol}
one can show by induction that $\p_t c_{i_1\dots i_\sigma}$ is equal to a
linear combination of the auxiliary constraints~\eqref{eq:FTN-1_aux_cnstr}
and their spatial derivatives.

\paragraph*{Principal part:}
The principal part of the FT$(N-1)$S 
system~\eqref{eq:N-1-reduction} is
\begin{align}
\p_t v^0 &\simeq
\left((A^0{}_0)^k + (D^0)^k\right) \p_k v^0 + (\bar D^0)^{kj} \p_k d_j,\nonumber\\
\p_t d_i &\simeq\left(B^0{}_{1\,0}\delta_i^k 
+ (D)_i{}^k\right)\p_k v^0 
+\left((A^0{}_0)^j\delta_i^k+(\bar D)_i{}^{kj}\right)\p_k d_j
+A^0{}_1 \p_i v^1,\nonumber\\
\nonumber
\p_t v^\mu &\simeq
\left((B^\mu{}_{1\,0})^{\ul{k}} + (D^\mu)^{\ul{k}}\right)
\p_{k_1}\dots \p_{k_\mu}v^0\nonumber\\
&\quad
+ \left(
(A^\mu{}_0)^{\ul{k}}+(\bar D^\mu{})^{\ul{k}}
\right)
\p_{k_1}\dots \p_{k_\mu} d_{k_{\mu+1}}
+\sum_{\nu=1}^{\mu+1}\hat A^\mu{}_\nu v^\nu,\nonumber\\
\nonumber
\p_t v^{N-1} &\simeq
\left(
(B^{N-1}{}_{1\,0})^{\ul{k}} + (D^{N-1})^{\ul{k}}
\right)
\p_{k_1}\dots \p_{k_{N-1}}v^0\nonumber\\
&\quad
+\left(
(A^{N-1}{}_0)^{\ul{k}}+ (\bar D^{N-1})^{\ul{k}}
\right) \p_{k_1}\dots \p_{k_{N-1}} d_{k_{N}}
+ \sum_{\nu=1}^{N-1}\hat A^{N-1}{}_\nu v^\nu,
\label{eq:N-1-reduction_PP}
\end{align}
where $\mu=1,\dots,N-2$.

For the ordering of variables $(v^0,d_i,v^1,\dots,v^{N-1})$
the principal matrix of the FT$(N-1)$S reduction
\eqref{eq:N-1-reduction} becomes
\begin{align}
&\mathcal A_{N-1}{}^p{}_{\ul{i}}{}^{\ul{j}}{}_i{}^j=\nonumber\\
\nonumber
&\left(
\begin{array}{ccc}
(\Delta^{N-1}_{00})^{p\ul{j}}_{\ul{i}k} ((A^0{}_0)^k + (D^0)^k)
&
(\Delta^{N-1}_{00})^{p\ul{j}}_{\ul{i}k} (\bar D^0)^{kj}
&
0\\
(\Delta^{N-1}_{00})^{p\ul{j}}_{\ul{i}k} ((B^0{}_{1\,0})\delta_i^k + (D)_i{}^k)
&
(\Delta^{N-1}_{00})^{p\ul{j}}_{\ul{i}k} \left((A^0{}_0)^{j}\delta_i^k
+(\bar D)_i{}^{kj}\right)
&
(\tilde\Delta^{N-1}_{0(\nu-1)})_{\ul{i}k}^{p\ul{j}} (\tilde A^0{}_\nu)\delta_i^k
\\
(\Delta^{N-1}_{(\mu-1)0})^{p\ul{j}}_{\ul{i}\ul{k}} ((B^\mu{}_{1\,0})^{\ul{k}} + (D^\mu)^{\ul{k}})
&
(\Delta^{N-1}_{(\mu-1)0})^{p\ul{j}}_{\ul{i}\ul{k}} \left((A^\mu{}_0)^{\ul{k}j}
+(\bar D^\mu)^{\ul{k}j}\right)
&
(\tilde\Delta^{N-1}_{(\mu-1)(\nu-1)})_{\ul{i}\ul{k}}^{p\ul{j}} (\tilde A^\mu{}_\nu)^{\ul{k}}
\end{array}
\right),
\end{align}
where $\mu,\nu=1,\dots,N-1$.

Note that
\begin{align}
(\Delta^{N-1}_{00})^{p\ul{j}}_{\ul{i}k} (A^0{}_0)^{j_{N-1}}\delta_{i_{N-1}}^k
&=(\Delta^{N}_{00})^{p\ul{j}}_{\ul{i}k} (A^0{}_0)^{k},\nonumber\\
(\Delta^{N-1}_{00})^{p\ul{j}}_{\ul{i}k} (A^0{}_1)\delta_{i_{N-1}}^k
&=(\Delta^{N}_{01})^{p\ul{j}}_{\ul{i}} (A^0{}_1),\nonumber\\
(\Delta^{N-1}_{(\mu-1)0})^{p\ul{j}}_{\ul{i}\ul{k}} (A^\mu{}_0)^{\ul{k}j_{N-1}}
&=(\Delta^{N}_{\mu 0})^{p\ul{j}}_{\ul{i}\ul{k}} (A^\mu{}_0)^{\ul{k}},\nonumber\\
(\tilde \Delta^{N-1}_{(\mu-1)(\nu-1)})^{\ul{k}}_{\ul{i}\ul{j}}
&=(\tilde \Delta^{N}_{\mu\nu})^{\ul{k}}_{\ul{i}\ul{j}}.\nonumber
\end{align}
Hence, if we rename $i\rightarrow i_{N-1}$ and $j\rightarrow j_{N-1}$ and
assume vanishing reduction parameters $\bar D$ then the FT$(N-1)$S principal
matrix has the FT$N$S principal matrix as a submatrix:
\begin{align}
\mathcal A_{N-1}{}^p{}_{\ul{i}}{}^{\ul{j}}{}_{i_{N-1}}{}^{j_{N-1}} &=
\left(
\begin{array}{cc}
*
&
0
\\
*
&
\mathcal A_{N}{}^p{}_{\ul{i}}{}^{\ul{j}}
\end{array}
\right).\nonumber
\end{align}

The FT$(N-1)$S principal symbol can be obtained by the appropriate contraction
of the principal matrix with a spatial vector~$s$:
\begin{align}
\label{eq:n-1_red_PSymbol}
P^s_{N-1}{}_i{}^j &=
\left(
\begin{array}{ccc}
((A^0{}_0)^k + (D^0)^k)s_k
&
(\bar D^0)^{kj}s_k
&
0\\
((B^0{}_{1\,0})\delta_i^k + (D)_i{}^k)s_k
&
\left((A^0{}_0)^{j}\delta_i^k
+(\bar D)_i{}^{kj}\right)s_k
&
(\tilde A^0{}_\nu)s_i
\\
((B^\mu{}_{1\,0})^{\ul{k}} + (D^\mu)^{\ul{k}})s^\mu_{\ul{k}}
&
\left((A^\mu{}_0)^{\ul{k}j}+(\bar D^\mu)^{\ul{k}j}\right)s^\mu_{\ul{k}}
&
(\tilde A^\mu{}_\nu)^{\ul{k}}s^{\mu-\nu+1}_{\ul{k}}
\end{array}
\right),
\end{align}
where $s^\nu_{\ul{k}}=s_{k_1}\dots s_{k_\nu}$.

\subsection{FT$N$S strong hyperbolicity}
\label{sec:FTNS_strong_hyp}

\paragraph*{Definitions of strong hyperbolicity:} Having 
defined reductions of FT$N$S systems to FT$(N-1)$S systems we 
now give two definitions of strong hyperbolicity for FT$N$S 
systems and show their equivalence. The first definition makes
use of the FT$(N-1)$S reduction.
\setcounter{subdefi}{1}
\renewcommand{\thedefi}{\arabic{defi}\alph{subdefi}}
\begin{defi}
\label{def:FTNS_strong_hyp_red}
The FT$N$S system~\eqref{eq:system_FTNS} is called \emph{FT$(N-1)$S strongly
hyperbolic} if there exists an FT$(N-1)$S reduction
\eqref{eq:N-1-reduction} which is FT$(N-1)$S strongly hyperbolic in the
sense of definition \ref{def:FTNS_strong_hyp_direct}.
\end{defi}
\addtocounter{defi}{-1}
\addtocounter{subdefi}{1}
The second definition does not rely on any reduction to lower order systems.
Note that for~$N=1$ it is consistent with the standard definition 
of strong hyperbolicity for fully first order systems~\cite{GusKreOli95}.
\begin{defi}
\label{def:FTNS_strong_hyp_direct}
The FT$N$S system~\eqref{eq:system_FTNS} is called FT$N$S strongly
hyperbolic if there exist a constant $M_N>0$ and a family of hermitian matrices
$H_N(s)$ such that
\begin{align}
H_N(s) P_N^s = (P_N^s)^\dag H_N(s), \quad
M_N^{-1}\,I  \leq H_N(s) \leq M_N\,I,\nonumber
\end{align}
where the matrix inequality is understood in the standard sense
\eqref{eq:matrix_inequality}.
\end{defi}
\renewcommand{\thedefi}{\arabic{defi}}

\paragraph*{Equivalence of the definitions:} We now demonstrate
that the two definitions of strong hyperbolicity are equivalent.
There is no major difference to the case of $N=3$ which was
discussed in section \ref{sec:FT3S_strong_hyp}.

\paragraph*{2+1 decomposition:} For the proof
we apply a 2+1 decomposition of the reduction variable~$d_i$. 
Let~$q_a^A$ be the orthogonal projector of~$s$, then the 
reduction variable is written as~$d_i = q_i^A d_A + s_i d_s$, 
where~$d_A s^A = 0$. With the state 
vector~$(v^0,d_A,d_s,v^1,\dots,v^{N-1})$ the principal 
symbol~\eqref{eq:n-1_red_PSymbol} becomes
\begin{align}
\label{eq:n-1_red_PSymbol_2+1}
&P^s_{N-1}{}_A{}^B =\\
\nonumber&
\left(
\begin{array}{cccc}
((A^0{}_0)^k + (D^0)^k)s_k
&
(\bar D^0)^{kj}s_kq_j^B
&
0
&
0\\
(D)_i{}^k s_k q^i_A
&
(\bar D)_i{}^{kj}s_k q_j^B q^i_A
&
0
&
0
\\
((B^0{}_{1\,0})\delta_i^k + (D)_i{}^k)s_k s^i
&
\left((A^0{}_0)^{j}\delta_i^k
+(\bar D)_i{}^{kj}\right)s_k q_j^B s^i
&
(A^0{}_0)^{j} s_j
&
\tilde A^0{}_\nu
\\
((B^\mu{}_{1\,0})^{\ul{k}} + (D^\mu)^{\ul{k}})s^\mu_{\ul{k}}
&
\left((A^\mu{}_0)^{\ul{k}j}+(\bar D^\mu)^{\ul{k}j}\right)s^\mu_{\ul{k}} q_j^B
&
(A^\mu{}_0)^{\ul{k}j}s^\mu_{\ul{k}} s_j
&
(\tilde A^\mu{}_\nu)^{\ul{k}}s^{\mu-\nu+1}_{\ul{k}}
\end{array}
\right).
\end{align}

\paragraph*{Definition \ref{def:FTNS_strong_hyp_red}
$\Rightarrow$ \ref{def:FTNS_strong_hyp_direct}: }
Assume that definition~\ref{def:FTNS_strong_hyp_red} is 
satisfied for an FT$(N-1)$S reduction~\eqref{eq:N-1-reduction}, i.e.
there exist a constant $M_{N-1}>0$ and a family of hermitian matrices
$H_{N-1}(s)^{AB}$ such that
\begin{align}
\nonumber
H_{N-1}(s)^{AB} P_{N-1}^s{}_{B}{}^{C} &= (P_{N-1}^s{}_{B}{}^{A})^\dag H_{N-1}(s)^{BC}, \\
\nonumber
M_{N-1}^{-1}\,I^{AB} & \leq H_{N-1}(s)^{AB} \leq M_{N-1}\,I^{AB},
\end{align}
where $I^{AB}$ is the appropriate identity.

Since~\eqref{eq:n-1_red_PSymbol_2+1}
is a block triangular matrix with the lower right diagonal block
\begin{align}
\left(
\begin{array}{cc}
(A^0{}_0)^{j} s_j
&
\tilde A^0{}_\nu
\\
(A^\mu{}_0)^{\ul{k}}s^\mu_{\ul{k}}
&
(\tilde A^\mu{}_\nu)^{\ul{k}}s^{\mu-\nu+1}_{\ul{k}}
\end{array}
\right) &= P_N^s,
\end{align}
the same arguments
used in section \ref{sec:FT3S_strong_hyp} can be applied
to show that in an appropriate decomposition of
$H_{N-1}(s)^{AB}$ the lower right block is a bounded symmetrizer
of~$P_N^s$. Hence, definition \ref{def:FTNS_strong_hyp_direct} is satisfied.

\paragraph*{Definition \ref{def:FTNS_strong_hyp_direct}
$\Rightarrow$ \ref{def:FTNS_strong_hyp_red}: }
Conversely, assuming definition \ref{def:FTNS_strong_hyp_direct}
is satisfied for an FT$N$S system~\eqref{eq:system_FTNS}
one can identify an FT$(N-1)$S reduction which is strongly hyperbolic.
We make the partial choice of reduction parameters
\begin{align}
\nonumber
(D^0)^k &= -(A^0{}_0)^k,&
(D)_i{}^k &= -(B^0{}_{1\,0})\delta_i^k,&
(D^\mu){}^{\ul{k}} &= -(B^\mu{}_{1\,0}){}^{\ul{k}},
\end{align}
for $\mu=1,\dots,N-1$. With this choice~\eqref{eq:n-1_red_PSymbol_2+1}
has the form
\begin{align}
P^s_{N-1}{}_A{}^B &=
\left(
\begin{array}{ccc}
0
&
0
&
0
\\
0
&
X_A^B
&
0
\\
0
&
Y^B
&
P_N^s
\end{array}
\right),\nonumber
\end{align}
where $X_A^B$ is the same matrix,~\eqref{eq:FT3S_XAB},
as in the FT3S case, $X_A^B = (\bar D)_i{}^{kj}s_k q_j^B q^i_A$,
and
\begin{align}
Y^B &=
\left(
\begin{array}{c}
\left((A^0{}_0)^{j}\delta_i^k
+(\bar D)_i{}^{kj}\right)s_k q_j^B s^i
\\
\left((A^\mu{}_0)^{\ul{k}j}+(\bar D^\mu)^{\ul{k}j}\right)s^\mu_{\ul{k}} q_j^B
\end{array}
\right).\nonumber
\end{align}

The same procedure that we used for FT3S systems in
section~\ref{sec:FT3S_strong_hyp}
allows the identification of a strongly hyperbolic FT$(N-1)$S reduction.
The key in this procedure is to choose~$
(\bar D)_i{}^{kj} = i\lambda \varepsilon_i{}^{jk}$,
where $\lambda\in\mathbb R$. With this the eigenvalues
of $X_A^B$ are $\pm\lambda$ and if $\lambda$
is sufficiently large then one can show that
definition \ref{def:FTNS_strong_hyp_red} is satisfied using the
assumption that the properties of the principal
symbol in definition \ref{def:FTNS_strong_hyp_direct} hold for $P_N^s$.
$\Box$

\subsection{Pseudo-differential reduction method}
\label{app:pseudo-red}

\paragraph*{Reduction variables:} To define strong hyperbolicity, 
in the literature a pseudo-differential reduction method is
used, see for example~\cite{NagOrtReu04}. With our calculations 
from section~\ref{sec:FTNS_strong_hyp} it is straightforward to apply
this method to FT$N$S systems as well. One takes a Fourier 
transformation in space of the FT$N$S 
system~\eqref{eq:system_FTNS} with wave 
number~$\omega_i = |\omega|s_i$. The Fourier transforms of 
the~$v^\mu$ are denoted $\hat v^\mu$ and we introduce a reduction 
variable~$\hat d^0 := i|\omega|\hat v^0$.

\paragraph*{Principal part:} Using the reduction variable 
the principal part of the Fourier transformed system (the terms 
with the highest order of $|\omega|$) can be written as
\begin{align}
\p_t \left(
\begin{array}{c}
(i|\omega|)^{N-2}\hat d^0\\
(i|\omega|)^{N-\mu-1}\hat v^\mu
\end{array}
\right)
 & \simeq i|\omega|
P_N^s
\left(
\begin{array}{c}
(i|\omega|)^{N-2}\hat d^0\\
(i|\omega|)^{N-\mu-1}\hat v^\mu
\end{array}
\right),\nonumber
\end{align}
where $P_N^s$ is the principal symbol of the FT$N$S 
system~\eqref{eq:system_FTNS} and the non principal terms not 
shown here are lower order in $|\omega|$. Applying this 
reduction~$(N-1)$ times results in a first order pseudo-differential 
system with principal symbol~$P_N^s$. Hence, using 
definition~\ref{def:FTNS_strong_hyp_direct}, an FT$N$S system is 
strongly hyperbolic if and only if there exists a strongly hyperbolic 
pseudo-differential reduction to order $(N-1)$.

\section{Higher order symmetric hyperbolicity}
\label{section:FTNS_sym_hyp}

In this section we show that one can extend the notion of
symmetric hyperbolicity to higher order in space systems.
For reasons discussed in section~\ref{sec:diff_reds} we
follow the strategy to employ a direct reduction
to first order.


\subsection{Reduction of FT$N$S systems to first order}
\label{section:Reduction}


\paragraph*{Reduction variables:}
We start with the description  of the reduction to first order
for the FT$N$S system~\eqref{eq:system_FTNS},
\begin{align}
\p_tv^\mu &= \sum_{\nu=0}^{\mu+1}\hat A^\mu{}_\nu v^{\nu}
+\sum_{\nu=0}^{\mu}\sum_{\rho=1}^{\mu-\nu+1}\hat B^\mu{}_{\rho\,\nu}
v^\nu+s^\mu,\nonumber\\
\p_tv^{N-1} &= \sum_{\nu=0}^{N-1}\hat A^{N-1}{}_\nu v^{\nu}
+\sum_{\nu=0}^{{N-1}}\sum_{\rho=1}^{N-\nu}\hat B^{N-1}{}_{\rho\,\nu} v^\nu
+s^{N-1},\label{eq:system_FTNS_app1}
\end{align}
with $\mu=0,\dots,N-2$. The reduction variables which we define 
are denoted $d^{\mu}_\nu$. The two indices have the following meaning:

\vspace{0.5em}
\begin{tabular}{|c|l|}
\hline
$\mu$ & the reduction variable refers to $v^\mu$ in the original
FT$N$S system\\
\hline
$\nu$ & the reduction variable has $\nu$ derivative indices ($1\leq\nu\leq
N-\mu-1$)\\
\hline
\end{tabular}
\vspace{0.5em}

The reduction variables are defined as
\begin{align}
\nonumber
(d^{\mu}_1)_i =
(d^{\mu}_1)_{\ul{i}} &:= \p_{i}v^{\mu},&
(d^{\mu}_\nu)_{i_1\dots i_\nu} = (d^{\mu}_\nu)_{\ul{i}}
&:=\p_{(i_1}(d^{\mu}_{\nu-1})_{i_2\dots i_\nu)},
\end{align}
where
$\mu=0,\dots,N-2$, $\nu=2,\dots,N-\mu-1$.
For convenience we also use the notation~$(d^{\mu}_0) = 
(d^{\mu}_0)_{\ul{i}} := v^{\mu}$. One finds that the important variables 
for the principal part of the first order reduction are those with the highest number of 
derivative indices, i.e. $d^{\mu}_{N-\mu-1}$. We abbreviate them as
$(d^\mu)_{\ul{i}} := (d^{\mu}_{N-\mu-1})_{\ul{i}}$,
where $\mu = 0,\ldots,N-2$.

\paragraph*{Unmodified equations of motion:} Using these 
definitions the equations of motion for the reduction variables 
can be derived by taking derivatives of~\eqref{eq:system_FTNS_app1}. 
One finds
\begin{align}
\label{eq:FTNS_rvars_eom}
\p_t(d^\mu_\nu)_{\ul{i}} &= \sum_{\rho=0}^{\mu+1}\hat A^\mu{}_\rho \, \p_{i_1}\dots\p_{i_\nu} v^{\rho}
+\sum_{\rho=0}^{\mu}\sum_{\sigma=1}^{\mu-\rho+1}\hat B^\mu{}_{\sigma\,\rho} \,
\p_{i_1}\dots\p_{i_\nu} v^\rho+\p_{i_1}\dots\p_{i_\nu} s^\mu,\nonumber
\end{align}
where~$\mu=0,\dots,N-2$, $\nu=1,\dots,N-\mu-1$ and we used
\begin{align}
\hat A^\mu{}_\rho &\, \p_{i_1}\dots\p_{i_\nu} v^{\rho}
:= (A^\mu{}_\rho)^{j_1\dots j_{\mu-\rho+1}} \p_{i_1}\dots\p_{i_\nu}\p_{j_1}\dots\p_{j_{\mu-\rho+1}} v^{\rho},\nonumber\\
\hat B^\mu{}_{\sigma\,\rho} &\, \p_{i_1}\dots\p_{i_\nu} v^{\rho}
:= (B^\mu{}_{\sigma\,\rho})^{j_1\dots j_{\mu-\rho-\sigma+1}} \p_{i_1}\dots\p_{i_\nu}\p_{j_1}\dots\p_{j_{\mu-\rho-\sigma+1}} v^{\rho}.
\end{align}
The terms~$\p_{i_1}\dots\p_{i_\nu} s^\mu$ in~\eqref{eq:FTNS_rvars_eom}
do not contain the~$d^\mu_\nu$ or~$v^\mu$ and can be seen as given
source terms.

\paragraph*{Auxiliary constraints:} The reduction variables 
are subject to the following first order auxiliary constraints
\begin{align}
(c^{\mu}_\nu){}_{i_1\dots i_\nu} &= (c^{\mu}_\nu)_{\ul{i}}:=
\p_{(i_1}(d^{\mu}_{\nu-1})_{i_2\dots i_\nu)}
-(d^{\mu}_\nu)_{i_1\dots i_\nu},\nonumber\\
(\bar c^{\mu}_\nu){}_{i_1\dots i_{\nu+1}} &= (\bar c^{\mu}_\nu){}_{\ul{i}}:=
\p_{i_1}(d^\mu_\nu)_{i_2\dots i_{\nu+1}} - \p_{(i_1}(d^\mu_\nu)_{i_2\dots i_{\nu+1})},
\label{eq:Reduction_Cnstr}
\end{align}
where $\mu=0,\dots,N-2$, $\nu=1,\dots,N-\mu-1$.

\paragraph*{First order reduction:} As before we ask now, which 
first order systems can be constructed by adding the 
constraints~\eqref{eq:Reduction_Cnstr} and their derivatives
to the right hand sides of~\eqref{eq:system_FTNS_app1} and
\eqref{eq:FTNS_rvars_eom}.

We note that lower order derivatives of the~$v^\mu$ 
(i.e. derivatives of order~$N-\mu-1$ or smaller) can be written
as linear combinations of the constraints, their derivatives
and undifferentiated reduction variables. For~$\mu=0,\dots,N-2$ 
and~$\nu=1,\dots,N-\mu-1$ one finds
\begin{align}
\label{eq:FTNS:lower_order_replacement}
\p_{i_1}\dots\p_{i_\nu}v^\mu &=
(d^\mu_\nu)_{\ul{i}}
+\sum_{\rho=0}^{\nu-1}\p_{i_1}\dots\p_{i_\rho}(c_{\nu-\rho}^\mu)_{i_{\rho+1}\dots i_{\nu}}
+\sum_{\rho=0}^{\nu-2}\p_{i_1}\dots\p_{i_\rho}(\bar c_{\nu-\rho-1}^\mu)_{i_{\rho+1}\dots i_{\nu}},
\end{align}
where the sums are understood to vanish when the upper bound is smaller than
the lower bound and the terms with $\rho=0$ should be interpreted as the
undifferentiated constraints.

One can prove this by induction over~$\nu$. 
For~$\nu=1$ we get~$\p_{i_1} v^\mu = (c^\mu_1)_{i_1} + (d^\mu_1)_{i_1},$
which is of the form~\eqref{eq:FTNS:lower_order_replacement}. Assuming
that~\eqref{eq:FTNS:lower_order_replacement} holds for a certain~$\nu$ 
we get
\begin{align}
\nonumber
\p_{i_1}&\dots\p_{i_{\nu+1}} v^\mu =
\p_{i_{1}}(d^\mu_{\nu})_{i_2\dots i_{\nu+1}}
+\p_{i_1}\sum_{\rho=1}^{\nu}\p_{i_2}\dots\p_{i_\rho}(c_{\nu-\rho+1}^\mu)_{i_{\rho+1}\dots i_{\nu+1}}\\
&\qquad
+\p_{i_1}\sum_{\rho=1}^{\nu-1}\p_{i_2}\dots\p_{i_\rho}(\bar c_{\nu-\rho}^\mu)_{i_{\rho+1}\dots i_{\nu+1}}.
\nonumber
\end{align}
In case $\nu<N-\mu-1$ the first term on the right hand side can be rewritten:
\begin{align}
\p_{i_{1}}(d^\mu_{\nu})_{i_2\dots i_{\nu+1}} &=
\p_{(i_{1}}(d^\mu_{\nu})_{i_2\dots i_{\nu+1})}
+(\bar c^\mu_{\nu})_{i_1\dots i_{\nu+1}}=(d^\mu_{\nu+1})_{\ul{i}}
+(c^\mu_{\nu+1})_{\ul{i}}
+(\bar c^\mu_{\nu})_{\ul{i}}.\nonumber
\end{align}
Hence, defining~$\tilde \nu = \nu + 1$ one gets
\begin{align}
\p_{i_1}&\dots\p_{i_{\tilde \nu}} v^\mu =
(d^\mu_{\tilde \nu})_{\ul{i}}
+\sum_{\rho=0}^{\tilde \nu-1}\p_{i_1}\dots\p_{i_\rho}(c_{\tilde \nu-\rho}^\mu)_{i_{\rho+1}\dots i_{\tilde \nu}}
+\sum_{\rho=0}^{\tilde \nu-2}\p_{i_1}\dots\p_{i_\rho}(\bar c_{\tilde \nu-\rho-1}^\mu)_{i_{\rho+1}\dots i_{\tilde \nu}}
\nonumber
\end{align}
for $\mu=0,\dots,N-2$ and $\tilde \nu = 1,\dots,N-\mu-1$.
Likewise one finds for $\mu=0,\dots,N-2$ and $\nu=N-\mu$
\begin{align}
\nonumber
\p_{i_1}&\dots\p_{i_{N-\mu}}v^\mu =
\p_{i_1}(d^\mu)_{i_2\dots i_{N-\mu}}
+\sum_{\rho=1}^{N-\mu-1}\p_{i_1}\dots\p_{i_\rho}(c_{N-\mu-\rho}^\mu)_{i_{\rho+1}\dots i_{N-\mu}}\\
&\quad
+\sum_{\rho=1}^{N-\mu-2}\p_{i_1}\dots\p_{i_\rho}(\bar c_{N-\mu-\rho-1}^\mu)_{i_{\rho+1}\dots i_{N-\mu}},
\nonumber
\end{align}
which is just the derivative of~\eqref{eq:FTNS:lower_order_replacement}
with $\nu=N-\mu-1$. This shows that when deriving a first order reduction
all lower order derivatives of the $v^\mu$ can be
completely absorbed into the constraint additions and that
up to constraint additions the highest order derivative of
$v^\mu$ becomes a first order symmetrized derivative of $d^\mu$.

\paragraph*{Reduction parameters:} The ambiguity of adding 
arbitrary linear combinations of the auxiliary 
constraints~\eqref{eq:Reduction_Cnstr} to the right hand sides 
of the first order system is parametrized by using 
\emph{reduction parameters}. We denote the constraint 
additions as
\begin{align}
\label{eq:1st_cnstr_additions}
D^{X\,\sigma}{}_{\nu} c^{\nu}_\sigma
&:= (D^{X\,\sigma}{}_{\nu})^{i_1\dots i_\sigma}
(c^{\nu}_\sigma)_{i_1\dots i_\sigma},&
\bar D^{X\,\sigma}{}_{\nu} \bar c^{\nu}_\sigma
&:= (\bar D^{X\,\sigma}{}_{\nu})^{i_1\dots i_{\sigma+1}}
(\bar c^{\nu}_\sigma)_{i_1\dots i_{\sigma+1}},
\end{align}
where
$\nu=0,\dots,N-2$ and $\sigma=1,\dots,N-\nu-1$. Depending on the equation
where we add those constraints the index $X$ is either a single index
(in the case of constraint additions to the right hand sides of $v^\mu$)
or an index-tuple $(\mu,\lambda,i_1,\dots,i_\lambda)$ (in the right hand
sides of~$d^{\mu}_{\lambda}$). The matrices~$(D^{X\,\sigma}{}_{\nu})^{i_1\dots i_\sigma}$
and~$(\bar D^{X\,\sigma}{}_{\nu})^{i_1\dots i_{\sigma+1}}$
are the reduction parameters. Without loss of generality we
assume the symmetry properties
\begin{align}
\label{eq:FTNS:red_var_symm}
\nonumber
(D^{X\,\sigma}{}_{\nu})^{i_1\dots i_{\sigma}} &= (D^{X\,\sigma}{}_{\nu})^{(i_1\dots i_{\sigma})},&
(\bar D^{X\,\sigma}{}_{\nu})^{(i_1\dots i_{\sigma+1})} &=0,\\
(\bar D^{X\,\sigma}{}_{\nu})^{i_1i_2\dots i_{\sigma+1}} &=
(\bar D^{X\,\sigma}{}_{\nu})^{i_1(i_2\dots i_{\sigma+1})}.&
\end{align}
The constraint additions on the different equations
are independent of each other. We use the short notation
\begin{align}
\label{eq:cnstr_additions}
C^X &=
\sum_{\nu=0}^{N-2}\sum_{\sigma=1}^{N-\nu-1} D^{X\,\sigma}{}_{\nu} c^{\nu}_\sigma
+\sum_{\nu=0}^{N-2}\sum_{\sigma=1}^{N-\nu-1} \bar D^{X\,\sigma}{}_{\nu} \bar c^{\nu}_\sigma,
\end{align}
where $X$ has the same meaning as in~\eqref{eq:1st_cnstr_additions}.

\paragraph*{Reduced equations of motion:} With these findings 
the right hand sides for the~$v^\mu$ in the first order 
reductions of~\eqref{eq:system_FTNS_app1} have the form
\begin{align}
\nonumber
\p_t v^\mu &=
C^\mu+s^\mu
+\sum_{\nu=0}^{\mu+1} (A^\mu{}_\nu)^{\ul{j}}(d^\nu_{\mu-\nu+1})_{\ul{j}}
+\sum_{\nu=0}^{\mu}\sum_{\rho=1}^{\mu-\nu+1}
(B^\mu{}_{\rho\,\nu})^{\ul{j}}(d^\nu_{\mu-\nu-\rho+1})_{\ul{j}},\\
\nonumber
\p_t v^{N-2} &=
C^{N-2}+s^{N-2}
+\sum_{\nu=0}^{N-2} (A^{N-2}{}_\nu)^{\ul{j}}(d^\nu)_{\ul{j}}
+ (A^{N-2}{}_{N-1})v^{N-1}\\
\nonumber &\qquad
+\sum_{\nu=0}^{N-2}\sum_{\rho=1}^{N-\nu-1}
(B^\mu{}_{\rho\,\nu})^{\ul{j}}(d^\nu_{\mu-\nu-\rho+1})_{\ul{j}},\\
\p_t v^{N-1} &= \sum_{\nu=0}^{{N-2}}(A^{N-1}{}_\nu)^{j\ul{i}}
\p_{j} (d^{\nu})_{\ul{i}}+(A^{N-1}{}_{N-1})^j \p_j v^{N-1}
+C^{N-1}+s^{N-1},
\label{eq:reduced_system}
\end{align}
for $\mu=0,\dots,N-3$. Likewise one finds the equations of motion 
for the reduction variables in the first order reduction
\begin{align}
\p_t(d^{\mu}_\sigma)_{\ul{i}} &= (C^{\mu}_\sigma)_{\ul{i}} +
\p_{i_1\dots i_\sigma}^\sigma s^{\mu}
+\sum_{\nu=0}^{\mu+1} (A^\mu{}_\nu)^{\ul{j}}
(d^\nu_{\mu+\sigma-\nu+1})_{i_1\dots i_\sigma \ul{j}}\nonumber\\
&\quad
+\sum_{\nu=0}^{\mu}\sum_{\rho=1}^{\mu-\nu+1}
(B^\mu{}_{\rho\,\nu})^{\ul{j}}(d^\nu_{\mu+\sigma-\nu-\rho+1})_{i_1\dots i_\sigma \ul{j}},
\nonumber\\
\p_t (d^\mu)_{\ul{i}} &=
\sum_{\nu=0}^{\mu+1}
(A^\mu{}_\nu)^{\ul{j}}(\Delta^N_{\mu\nu})^{p\ul{k}}_{\ul{i}\ul{j}}\p_{(p} (d^\nu)_{\ul{k})}
+(C^{\mu}_{N-\mu-1})_{\ul{i}}+\p_{i_1\dots i_{N-\mu-1}}^{N-\mu-1} s^{\mu}\nonumber\\
&\quad
+\sum_{\nu=0}^{\mu}\sum_{\rho=1}^{\mu-\nu+1}
(B^\mu{}_{\rho\,\nu})^{\ul{j}}(d^\nu_{N-\nu-\rho})_{i_1\dots i_{N-\mu-1} \ul{j}},
\nonumber\\
\nonumber
\p_t (d^{N-2})_{i} &=
\sum_{\nu=0}^{{N-2}}(A^{N-2}{}_\nu)^{\ul{j}}\p_{(i}(d^\nu)_{\ul{j})}
+A^{N-2}{}_{{N-1}}\p_{i}v^{N-1}+(C^{N-2}_{1})_{{i}}+\p_i s^{N-2}\\
&\quad+\sum_{\nu=0}^{N-2}\sum_{\rho=1}^{N-\nu-1}
(B^{N-2}{}_{\rho\,\nu})^{\ul{j}}(d^\nu_{N-\nu-\rho})_{i\ul{j}},
\label{eq:1st_order_red}
\end{align}
where~$\mu=0,\dots,N-3$ and~$\sigma=1,\dots,N-\mu-2$.
The~$C^{\mu}_{\sigma}$ can be read off from~\eqref{eq:cnstr_additions},
and in~\eqref{eq:1st_order_red} we used the symbol
$(\Delta^N_{\mu\nu})^{\ul{k}}_{\ul{i}\ul{j}}$
which is defined in~\eqref{eq:FTNS_aux_defs}. We call a system of 
the form~\eqref{eq:reduced_system},\eqref{eq:1st_order_red}
a~\emph{first order reduction} or~\emph{FT1S reduction} of 
the FT$N$S system~\eqref{eq:system_FTNS_app1}.

\paragraph*{Principal part:} We now write the principal part 
of the first order 
reduction~\eqref{eq:reduced_system},\eqref{eq:1st_order_red} 
in a standard form. The terms that contain derivatives in the 
constraint additions are
\begin{align}
\nonumber
C^X &\simeq
\sum_{\nu=0}^{N-2}\sum_{\sigma=0}^{N-\nu-2}
(D^{X\,(\sigma+1)}{}_{\nu})^{i_1\dots i_{\sigma+1}}
\p_{i_1}(d^{\nu}_{\sigma})_{i_2\dots i_{\sigma+1}}\\
\nonumber&\quad
+\sum_{\nu=0}^{N-2}\sum_{\sigma=1}^{N-\nu-1}
(\bar D^{X\,\sigma}{}_{\nu})^{i_1\dots i_{\sigma+1}}
\p_{i_1}(d^{\nu}_\sigma)_{i_2\dots i_{\sigma+1}}\\
&=
\sum_{\nu=0}^{N-2}(D^{X\,1}{}_{\nu})^{i_1}
\p_{i_1}v^{\nu}
+\sum_{\nu=0}^{N-3}\sum_{\sigma=1}^{N-\nu-2}
(\tilde D^{X\,\sigma}{}_{\nu})^{i_1\dots i_{\sigma+1}}
\p_{i_1}(d^{\nu}_{\sigma})_{i_2\dots i_{\sigma+1}}\nonumber\\
&\quad
+\sum_{\nu=0}^{N-2}
(\bar D^{X\,(N-\nu-1)}{}_{\nu})^{i_1\dots i_{N-\nu}}
\p_{i_1}(d^{\nu})_{i_2\dots i_{N-\nu}},\nonumber
\end{align}
where
$(\tilde D^{X\,\sigma}{}_{\nu})^{i_1\dots i_{\sigma+1}}
=(\tilde D^{X\,\sigma}{}_{\nu})^{\ul{i}}
:= (D^{X\,(\sigma+1)}{}_{\nu})^{i_1\dots i_{\sigma+1}}
+(\bar D^{X\,\sigma}{}_{\nu})^{i_1\dots i_{\sigma+1}}$,
and we used the symmetry properties~\eqref{eq:FTNS:red_var_symm}
of the reduction parameters. The symbol $\simeq$ means
equality up to terms without derivatives and
$X$ has the same meaning as in~\eqref{eq:1st_cnstr_additions}.
We write the state vector as~$
u_{\ul{i}}:=
\left(
(d^{\tilde \mu}_\sigma)_{\ul{i}},
 v^{\mu},
 (d^{\mu})_{\ul{i}},
 w
\right)^\dag$,
where the bounds for the indices are~$\mu = 0,\dots,N-2$, $\tilde 
\mu=0,\dots,N-3$ and~$\sigma=1,\dots,N-\tilde \mu-2$. The principal 
part of the system~\eqref{eq:reduced_system},\eqref{eq:1st_order_red}
is then~$\p_t u_{\ul{i}} \simeq\mathcal A_1{}^p{}_{\ul{i}}{}^{\ul{j}}\p_pu_{\ul{j}}$,
where
\begin{align}
\label{eq:PP_matrix_1st}
&\mathcal A_1{}^p{}_{\ul{i}}{}^{\ul{j}} =\\
\nonumber
&\left(
\begin{array}{cccc}
(\tilde D{}^{\tilde \mu}{}_\sigma{}^\rho{}_{\tilde\nu}){}_{\ul{i}}{}^{p\ul{j}}
&
(D^{\tilde \mu}{}_\sigma{}^1{}_{\nu})_{\ul{i}}{}^p
&
(\bar D{}^{\tilde \mu}{}_{\sigma}{}^{N-\nu-1}{}_{\nu}){}_{\ul{i}}{}^{p\ul{j}}
&
0
\\
(\tilde D{}^{\mu\,\rho}{}_{\tilde \nu})^{p\ul{j}}
&
(D^{\mu\,1}{}_{\nu})^p
&
(\bar D{}^{\mu\,(N-\nu-1)}{}_{\nu}){}^{p\ul{j}}
&
0
\\
(\tilde D{}^{\mu}{}_{N-\mu-1}{}^{\rho}{}_{\tilde \nu}){}_{\ul{i}}{}^{p\ul{j}}
&
(D^{\mu}{}_{N-\mu-1}{}^1{}_{\nu})_{\ul{i}}{}^p
&
(\tilde A^{\mu}{}_{\nu})^{\ul{k}}(\tilde \Delta^N_{\mu\nu})^{(p\ul{j})}_{\ul{i}\ul{k}}
+ (\bar D{}^{\mu}{}_{N-\mu-1}{}^{N-\nu-1}{}_{\nu}){}_{\ul{i}}{}^{p\ul{j}}
&
0
\\
(\tilde D{}^{(N-1)\,\rho}{}_{\tilde \nu}){}^{p\ul{j}}
&
(D^{(N-1)\,1}{}_{\nu}){}^p
&
(\tilde A^{N-1}{}_{\nu})^{p\ul{j}}
+ (\bar D{}^{(N-1)\,(N-\nu-1)}{}_{\nu}){}^{p\ul{j}}
&
(A^{N-1}{}_{N-1})^{p}
\end{array}
\right)
\end{align}
and we used definition~\eqref{eq:FTNS_aux_defs}
for the symbols $\tilde A_{\mu\nu}$ and  $\tilde \Delta^N_{\mu\nu}$.
The range of the various indices in this expression is~$\mu,\nu=0,
\dots,N-2$, $\tilde \mu,\tilde \nu=0,\dots,N-3$, $\sigma=1,
\dots,N-\tilde \mu-2$ and~$\rho=1,\dots,N-\tilde\nu-2$.

\paragraph*{Auxiliary constraint evolution:}
Having defined what we mean by first order reductions of the FT$N$S
system~\eqref{eq:system_FTNS_app1} we note that again there is a
one-to-one correspondence between the solutions of the first order
reduction~\eqref{eq:reduced_system},\eqref{eq:1st_order_red}
which satisfy the auxiliary constraints
\eqref{eq:Reduction_Cnstr} and the solutions of the original FT$N$S
system~\eqref{eq:system_FTNS_app1}. This property of the reduced 
systems is a consequence of the construction procedure, which 
leads to a closed constraint evolution system. To see that the 
constraint evolution system is closed is straightforward. One 
just uses equation~\eqref{eq:FTNS:lower_order_replacement} to 
express the reduction variables by derivatives of the~$v^\mu$ 
and constraints. In the right hand sides of the constraint
evolution system the derivatives of the~$v^\mu$ cancel due to 
their symmetry in the derivative indices. This leads to the 
closed constraint evolution system. However, one obtains 
very lengthy expressions, so we suppress the details.

\subsection{FT$N$S symmetric hyperbolicity}

\paragraph*{Definitions of symmetric hyperbolicity:} To get 
definitions of symmetric hyperbolicity for FT$N$S systems
we generalize the second order definitions given in
\cite{GunGar05}. We start by defining candidate symmetrizers.
\setcounter{subdefi}{1}
\renewcommand{\thedefi}{\arabic{defi}\alph{subdefi}}
\begin{defi}
\label{def:cand_sym_FTNS}
Given an FT$N$S system~\eqref{eq:system_FTNS_app1} we call a Hermitian
matrix $H_N^{\ul{i}\,\ul{j}} = H_N^{(\ul{i})\,(\ul{j})}$ such that the product matrix
$S^N_{\ul i}H_N^{\ul{i}\,\ul{k}}{\mathcal A}_N^p{}_{\ul{k}}{}^{\ul{j}}s_p S^N_{\ul j}$,
is Hermitian for every $s$ an \emph{FT$N$S candidate symmetrizer}.
\end{defi}
\addtocounter{defi}{-1}
\addtocounter{subdefi}{1}
When we refer to lower order systems then we require the
existence of a first order reduction such that
there is a candidate symmetrizer in the usual first order sense:
\begin{defi}
\label{def:1st_cand_sym_FTNS}
We call a Hermitian matrix $H_1^{\ul{i}\,\ul{j}} = H_1^{(\ul{i})\,(\ul{j})}$ a
\emph{first order candidate symmetrizer} of~\eqref{eq:system_FTNS_app1} if there
exists a first order reduction
\eqref{eq:reduced_system},\eqref{eq:1st_order_red}
such that the product
$H_1^{\ul{i}\,\ul{k}}{\mathcal A}_1^p{}_{\ul{k}}{}^{\ul{j}}s_p,$
is Hermitian for every $s$.
\end{defi}
In both cases we call a positive definite candidate symmetrizer
a \emph{symmetrizer}. With this it is straightforward to define 
symmetric hyperbolicity with and without reference to a first 
order reduction
\setcounter{subdefi}{1}
\begin{defi}
\label{def:sym_hyp_FTNS}
The FT$N$S system~\eqref{eq:system_FTNS_app1} is called 
\emph{FT$N$S symmetric hyperbolic} if there exists a positive 
definite FT$N$S candidate symmetrizer.
\end{defi}
\addtocounter{defi}{-1}
\addtocounter{subdefi}{1}

\begin{defi}
\label{def:1st_sym_hyp_FTNS}
The FT$N$S system~\eqref{eq:system_FTNS_app1} is called
\emph{first order symmetric hyperbolic} if there exists a positive definite
first order candidate symmetrizer.
\end{defi}
\renewcommand{\thedefi}{\arabic{defi}}

\paragraph*{Relationship between the definitions:}
Now we show for arbitrary~$N$ that definition
\ref{def:1st_sym_hyp_FTNS} implies \ref{def:sym_hyp_FTNS}.
The proof of the reverse direction for arbitrary $N$
involves very complicated expressions. We
show in {\tt automatic\_construction\_of\_J.nb}
\footnote{{\tt http://www.tpi.uni-jena.de/\~{}hild/FTNS.tgz}}
that for $N=3$ it is indeed true that \ref{def:sym_hyp_FTNS}
implies \ref{def:1st_sym_hyp_FTNS}. For $N\leq \maxN$
we checked this using the same computer algebra method. 
However, whether the statement holds for arbitrary~$N$ is an open 
question.

\paragraph*{Construction of $N$th order from first order candidates: }
Let $H_1^{\ul{i}\,\ul{j}}$ be the candidate symmetrizer
of a first order reduction with principal matrix
${\mathcal A}_1^p{}_{\ul{k}}{}^{\ul{j}}$. We group the state vector
as
$u_{\ul{i}}:=
\left(
(d^{\tilde \mu}_\sigma)_{\ul{i}},
 v^{\mu}\,\,|\,\,
 (d^{\mu})_{\ul{i}},
 w
\right)^\dag
$
and in this way decompose $H_1$ and $\mathcal A_1$
consistently into
\begin{align}
\label{eq:FTNS_2x2_decomp_1st}
H_1^{\ul{i}\,\ul{j}} &=
\left(
\begin{array}{cc}
H_{11}^{\ul{i}\,\ul{j}} &
H_{12}^{\ul{i}\,\ul{j}} \\
H_{21}^{\ul{i}\,\ul{j}} &
H_{22}^{\ul{i}\,\ul{j}}
\end{array}
\right),&
{\mathcal A}_1^p{}_{\ul{k}}{}^{\ul{j}}
&=
\left(
\begin{array}{cc}
{\mathcal A}_{11}^p{}_{\ul{k}}{}^{\ul{j}} &
{\mathcal A}_{12}^p{}_{\ul{k}}{}^{\ul{j}} \\
{\mathcal A}_{21}^p{}_{\ul{k}}{}^{\ul{j}} &
{\mathcal A}_{22}^p{}_{\ul{k}}{}^{\ul{j}}
\end{array}
\right),
\end{align}
where
\begin{align}
{\mathcal A}_{12}^p{}_{\ul{k}}{}^{\ul{j}} &=
\left(
\begin{array}{cc}
(\bar D{}^{\tilde \mu}{}_{\sigma}{}^{N-\nu-1}{}_{\nu}){}_{\ul{i}}{}^{p\ul{j}}
&
0
\\
(\bar D{}^{\mu\,(N-\nu-1)}{}_{\nu}){}^{p\ul{j}}
&
0
\end{array}
\right),\nonumber\\
{\mathcal A}_{22}^p{}_{\ul{k}}{}^{\ul{j}} &=
\left(
\begin{array}{cc}
(\tilde A^{\mu}{}_{\nu})^{\ul{k}}(\tilde \Delta^N_{\mu\nu})^{p(\ul{j})}_{(\ul{i})\ul{k}}
+ (\bar D{}^{\mu}{}_{N-\mu-1}{}^{N-\nu-1}{}_{\nu}){}_{\ul{i}}{}^{p\ul{j}}
&
0
\\
(\tilde A^{N-1}{}_{\nu})^{p\ul{j}}
+ (\bar D{}^{(N-1)\,(N-\nu-1)}{}_{\nu}){}^{p\ul{j}}
&
(A^{N-1}{}_{N-1})^{p}
\end{array}
\right),\nonumber
\end{align}
i.e. such that~$\mathcal A_{22}$ is the lower right $2\times 2$
block of~\eqref{eq:PP_matrix_1st}. In this decomposition
the lower right block of the product
$H_1^{\ul{i}\,\ul{k}}{\mathcal A}_1^p{}_{\ul{k}}{}^{\ul{j}}$ is
\begin{align}
\label{eq:FTNS_lr_hermit_block}
H_{21}^{\ul{i}\,\ul{k}}{\mathcal A}_{12}^p{}_{\ul{k}}{}^{\ul{j}}+
H_{22}^{\ul{i}\,\ul{k}}{\mathcal A}_{22}^p{}_{\ul{k}}{}^{\ul{j}}.
\end{align}
Hence, the matrix~\eqref{eq:FTNS_lr_hermit_block} is Hermitian
for every $p$, because it is a principal minor of
$H_1^{\ul{i}\,\ul{k}}{\mathcal A}_1^p{}_{\ul{k}}{}^{\ul{j}}$.

Moreover, because $S^N_{\ul{i}}$ is Hermitian for every $s$,
we get that~\eqref{eq:FTNS_lr_hermit_block} contracted from
left and right with $S^N_{\ul{i}}$ is Hermitian for every $p$
as well. Thus,
$S^N_{\ul{i}} H_{21}^{\ul{i}\,\ul{k}}{\mathcal A}_{12}^p{}_{\ul{k}}{}^{\ul{j}} s_p S^N_{\ul{j}}+
S^N_{\ul{i}} H_{22}^{\ul{i}\,\ul{k}}{\mathcal A}_{22}^p{}_{\ul{k}}{}^{\ul{j}} s_p S^N_{\ul{j}}$,
is Hermitian for every $s$. On the other hand
${\mathcal A}_{12}^p{}_{\ul{k}}{}^{\ul{j}} s_p S^N_{\ul{j}}= 0$ and 
${\mathcal A}_{22}^p{}_{\ul{k}}{}^{\ul{j}} s_p S^N_{\ul{j}}
={\mathcal A}_{N}^p{}_{\ul{k}}{}^{\ul{j}} s_p S^N_{\ul{j}}$,
because the symmetric part of the reduction parameters contained
in ${\mathcal A}_{12}$ and ${\mathcal A}_{22}$ vanishes.

Since $H_{22}$ is on the diagonal of $H_1$ it is Hermitian as well.
Thus, with the identification
$H_{N}^{\ul{i}\,\ul{k}} = H_{22}^{\ul{i}\,\ul{k}}$
there exists an~$N$th order candidate symmetrizer.

\paragraph*{Positivity of the FT$N$S candidate symmetrizer: }
Moreover, if $H_1^{\ul{i}\,\ul{j}}$ is positive definite
then also $H_{22}^{\ul{i}\,\ul{k}}$ is positive definite, because it
is a principal minor. Hence, if there exists a first order 
reduction of~\eqref{eq:system_FTNS_app1} which is symmetric 
hyperbolic then~\eqref{eq:system_FTNS_app1} is also FT$N$S 
symmetric hyperbolic with the symmetrizer~$H_{N}^{\ul{i}\,\ul{k}} 
= H_{22}^{\ul{i}\,\ul{k}}$.

\paragraph*{Construction of a symmetric hyperbolic first order 
reduction:} Now, for the reverse direction we assume a given
FT$N$S symmetrizer, $H_{N}^{\ul{i}\,\ul{j}}$, and would like to show that
there exists a first order reduction with symmetrizer
\begin{align}
H_1^{\ul{i}\,\ul{j}} &=
\left(
\begin{array}{c|c}
\begin{array}{ccc}
\Gamma_{\rho}^{(i_1\dots i_{\rho})\,(j_1\dots j_{\rho})} & \cdots & 0\\
\vdots & \ddots & \vdots\\
0 & \cdots & 1\\
\end{array}
& 0\\
\hline
0 & H_N^{\ul{i}\,\ul{j}}
\end{array}
\right).\nonumber
\end{align}
(in the $2\times 2$ decomposition~\eqref{eq:FTNS_2x2_decomp_1st})
with
$\Gamma_{\rho}^{i_1\dots i_{\rho}\,j_1\dots j_{\rho}}
=\gamma^{i_1j_1}\dots\gamma^{i_\rho j_\rho}
$
and $\rho$ such that the $\Gamma_\rho^{\ul{i}\,\ul{j}}$ has the appropriate
number of indices. Obviously positivity of
$H_{N}^{\ul{i}\,\ul{j}}$ implies positivity of 
$H_1^{\ul{i}\,\ul{j}}$, i.e. we only need to show the conservation 
property.

To identify an appropriate reduction to first order we first make
the partial choice of reduction parameters
$
(D^{X\,\sigma}{}_{\nu})^{\ul{i}}= 
(\bar D^{X\,\sigma}{}_{\nu})^{\ul{i}} = 0,
$
for $\nu = 0,\dots,N-3$ and $\sigma = 1,\dots,N-\nu-2$, i.e. only
the reduction parameters which correspond to the constraint additions
with the highest number of derivative indices remain. As in
\eqref{eq:1st_cnstr_additions}
$X$ denotes either a single index $\mu=0,\dots,N-1$
or an index tuple $(\mu,\lambda,i_1,\dots,i_\lambda)$ with
$\mu=0,\dots,N-2$ and $\lambda = 1,\dots,N-\mu-1$.

With that choice most of the components of $\mathcal A_1^p{}_{\ul{i}}{}^{\ul{j}}$
vanish and the statement which needs to be shown is that
there exist reduction parameters such that
$H_N^{\ul{i}\,\ul{j}}\tilde{\mathcal A}_N^p{}_{\ul{j}}{}^{\ul{k}}s_p$
is Hermitian for every $s$, where
$\tilde{\mathcal A}_N^p{}_{\ul{j}}{}^{\ul{k}}=
{\mathcal A}_N^p{}_{\ul{j}}{}^{\ul{k}}
+{\bar D}_N^p{}_{\ul{j}}{}^{\ul{k}}$,
and
\begin{align}
{\bar D}_N^p{}_{\ul{j}}{}^{\ul{k}} &=
\left(
\begin{array}{cc}
(\bar D{}^{\mu}{}_{N-\mu-1}{}^{N-\nu-1}{}_{\nu}){}_{\ul{j}}{}^{p\ul{k}}
&
0
\\
(\bar D{}^{(N-1)\,(N-\nu-1)}{}_{\nu}){}^{p\ul{k}}
&
0
\end{array}
\right).\nonumber
\end{align}

We define
\begin{align}
\left(T_{\mu\nu}^{p\,\ul{i}\,\ul{k}}\right)_{\mu=0,\dots,N-1}^{\nu=0,\dots,N-1} =
T_N^{p\,\ul{i}\,\ul{k}} := H_N^{\ul{i}\,\ul{j}}{\mathcal A}_N^p{}_{\ul{j}}{}^{\ul{k}},\nonumber\\
\left(J_{\mu\nu}^{p\,\ul{i}\,\ul{k}}\right)_{\mu=0,\dots,N-1}^{\nu=0,\dots,N-1} =
J_N^{p\,\ul{i}\,\ul{k}} := H_N^{\ul{i}\,\ul{j}}{\bar D}_N^p{}_{\ul{j}}{}^{\ul{k}},
\label{eq:FTNS:def_TJ}
\end{align}
where it is understood that decomposition of $T_N$ and $J_N$ into $T_{\mu\nu}$
and $J_{\mu\nu}$ is the one induced by the original FT$N$S system
\eqref{eq:system_FTNS_app1}.

One finds that the hermiticity of $H_N^{\ul{i}\,\ul{j}}\tilde{\mathcal A}_N^p{}_{\ul{j}}{}^{\ul{k}}s_p$
is equivalent to
\begin{align}
\label{eq:FTNS:conserv_TJ}
T_N^{p\,\ul{i}\,\ul{k}} + J_N^{p\,\ul{i}\,\ul{k}}
&= \left(T_N^{p\,\ul{i}\,\ul{k}} + J_N^{p\,\ul{i}\,\ul{k}}\right)^\dag\quad \forall p.
\end{align}
In components equation~\eqref{eq:FTNS:conserv_TJ} is
\begin{align}
\label{eq:FTNS:conserv_TJ_comp}
J_{\mu\nu}^{p\,\ul{i}\,\ul{j}} + T_{\mu\nu}^{p\,\ul{i}\,\ul{j}}
&=
J_{\nu\mu}^{\dag p\,\ul{j}\,\ul{i}} + T_{\nu\mu}^{\dag p\,\ul{j}\,\ul{i}}\quad\forall p.
\end{align}

From definition~\eqref{eq:FTNS:def_TJ} we see that the $J_{\mu\nu}$
need to satisfy certain symmetry conditions:
\begin{align}
\label{eq:FTNS:Jmunu_symmetry}
J_{\mu\nu}^{(p|\,\ul{i}\,|\ul{j})} &= 0, &
J_{\mu\nu}^{p\,\ul{i}\,\ul{j}} &= J_{\mu\nu}^{p\,(\ul{i})\,(\ul{j})}
\end{align}
for $\mu=0,\dots,N-1$, $\nu=0,\dots,N-2$.
Note that 
$J_{\mu\nu}^{(p|\,\ul{i}\,|\ul{j})} = 0
\Rightarrow
J_{\mu(N-1)}^{p\,\ul{i}} = 0$.
Since $H_N$ is an FT$N$S
candidate symmetrizer and due to the fact that certain
symmetries hold for $H_N$ and $\mathcal A_N$
the $T_{\mu\nu}$ satisfy
\begin{align}
\label{eq:FTNS:sym_T}
T_{\mu\nu}^{(p\,\ul{i}\,\ul{j})} &= T_{\nu\mu}^{\dag (p\,\ul{j}\,\ul{i})},&
T_{\mu\nu}^{p\,\ul{i}\,\ul{j}} &= T_{\mu\nu}^{p\,(\ul{i})\,(\ul{j})}
\end{align}
for $\mu,\nu=0,\dots,N-1$.

Now, assuming a given $J_{\mu\nu}$ which satisfies~\eqref{eq:FTNS:Jmunu_symmetry}
we can easily calculate the reduction variables ${\bar D}_N$ by multiplication
of $J_{N}$ from the left with $H_N^{-1}$ (which exists because $H_N$ is
positive definite by assumption).

Hence, the existence of a first order reduction with candidate symmetrizer
$H_1$ is shown if we prove that there exist~$J_{\mu\nu}$ which 
satisfy~\eqref{eq:FTNS:conserv_TJ_comp} and~\eqref{eq:FTNS:Jmunu_symmetry}
given~\eqref{eq:FTNS:sym_T} holds.

One approach for the proof of this statement is the following.
One defines
$
V_{\mu\nu}^{p\,\ul{i}\,\ul{j}} :=
T_{\mu\nu}^{p\,\ul{i}\,\ul{j}}
-T_{\nu\mu}^{\dag\,p\,\ul{j}\,\ul{i}},
$
which satisfies
$
V_{\mu\nu}^{\dag\, p\,\ul{i}\,\ul{j}} = -V_{\nu\mu}^{p\,\ul{j}\,\ul{i}},
V_{\nu\mu}^{(p\,\ul{j}\,\ul{i})} = 0,
$
and~
$
V_{\nu\mu}^{p\,\ul{j}\,\ul{i}} = V_{\nu\mu}^{p\,(\ul{j})\,(\ul{i})}.
$
Then one uses the ansatz 
\begin{align}
J_{\mu\nu}^{p\,\ul{i}\,\ul{j}} =
\sum_{\pi\in S_{(2N-\mu-\nu-1)}} x_\pi V_{\mu\nu}^{\pi(p)\,\pi(\ul{i})\,\pi(\ul{j})}\nonumber
\end{align}
in equations~\eqref{eq:FTNS:conserv_TJ_comp} and~\eqref{eq:FTNS:Jmunu_symmetry}
to get a linear system for the coefficients~$x_\pi$. If one can show that 
this linear system has a solution then the existence of a first order 
reduction with candidate symmetrizer~$H_1$ follows with the arguments 
given above. This procedure is shown for~$N=3$ 
in {\tt automatic\_construction\_of\_J.nb}
\footnote{{\tt http://www.tpi.uni-jena.de/\~{}hild/FTNS.tgz}}, and we performed the 
same calculations for~$N\leq \maxN$ using computer algebra. 

For arbitrary~$N$ 
the number of coefficients increases like~$N!$. Although many of them can be 
considered redundant because of the symmetries of~$V_{\mu\nu}$ and~$J_{\mu\nu}$, 
the construction of the linear system for the~$x_\pi$ is difficult for 
arbitrary~$N$. Therefore we leave this question open.

\paragraph*{Connection to energy conservation. } Given an FT$N$S 
symmetric hyperbolic system it is straightforward to show that 
the quantity
$E := \int d^3x\,u_{\ul{i}}^\dag H_N^{\ul{i}\ul{j}}u_{\ul{j}}$
is a conserved energy in the principal part, i.e. $E>0$ and
$\p_t E \simeq 0$. To show this one uses the positivity of
$H_N^{\ul{i}\ul{j}}$ and the equations of motion
\eqref{eq:system_FTNS_app1} together with integration by parts:
\begin{align}
\p_t &E \simeq
\frac12\sum_{\mu,\nu=0}^{N-1} (-1)^{N-\mu-1} \int d^3x\,v^{\dag\,\mu}V_{\mu\nu}^{p\,\ul{i}\,\ul{j}}
\p^{(2N-\mu-\nu-1)}_{p\ul{i}\ul{j}}v^\nu= 0,\nonumber
\end{align}
where
\begin{align}
\dot\epsilon_{\mu\nu} &:=
\left(\p^{(N-\mu-1)}_{\ul{i}}v^{\dag\,\mu}\right)
T_{\mu\nu}^{p\,\ul{i}\,\ul{j}}
\;\p^{(N-\nu)}_{p\ul{j}}v^\nu
+
\left(\p^{(N-\mu)}_{p\ul{i}}v^{\dag\,\mu}\right)
T_{\nu\mu}^{\dag\,p\,\ul{j}\,\ul{i}}
\;\p^{(N-\nu-1)}_{\ul{j}}v^\nu\nonumber
\end{align}
and
$
\p^{(2N-\mu-\nu-1)}_{p\ul{i}\ul{j}}v^\nu
= \p_{p}\p_{i_1}\dots\p_{i_{N-\mu-1}}\p_{j_1}\dots\p_{j_{N-\nu-1}}v^\nu.
$
Since $v^\mu$ is an arbitrary solution of the equations of motion
this implies that there exist fluxes $\phi^p_{\mu\nu}$ such that
$\dot\epsilon_{\mu\nu}=\p_p \phi^p_{\mu\nu}\quad\forall\mu,\nu=0,\dots,N-1$.
The existence of a symmetric hyperbolic first order reduction
with the symmetrizer~$H_1^{\ul{i}\ul{j}}$ means that there
exist fluxes~$\phi_{\mu\nu}^p$ of the form
\begin{align}
\phi^p_{\mu\nu} &= \left(\p^{(N-\mu-1)}_{\ul{i}}v^{\dag\,\mu}\right)
F_{\mu\nu}^{p\,\ul{i}\,\ul{j}}
\;\p^{(N-\nu-1)}_{\ul{j}}v^\nu,\nonumber
\end{align}
with
$
F_{\mu\nu}^{p\,\ul{i}\,\ul{j}}
=
J_{\mu\nu}^{p\,\ul{i}\,\ul{j}}
+T_{\mu\nu}^{p\,\ul{i}\,\ul{j}},
$
i.e. that the $v^\mu$ appear in the fluxes only with $(N-\mu-1)$
derivatives.

\section{Conclusion}
\label{section:Conclusion}

We described how the existing notion of strong hyperbolicity
for first and second order in space evolution
equations~\cite{GusKreOli95,GunGar05} can be extended to
FT$N$S systems, i.e. evolution equations of arbitrary
spatial order. The definitions of FT$N$S strong and symmetric 
hyperbolicity allow for the direct construction of well-posed 
initial (boundary) value problems for systems of higher order.

This extension is achieved by proposing a reasonable definition
of strong hyperbolicity for FT$N$S systems and showing that
this new definition can be reduced to the lower order equivalent.
The proof is performed with the help of an iterative differential
reduction of the FT$N$S system from arbitrary to first order.
One finds that an evolution system is FT$N$S strongly hyperbolic
if and only if there exists a first order reduction which is
strongly hyperbolic in the standard first order
sense.

We also considered symmetric hyperbolicity of FT$N$S systems.
In this case one finds that it is better to introduce a direct
reduction to first order instead of using the iterative method
applied to prove statements about strong hyperbolicity. We 
proposed a definition of FT$N$S symmetric hyperbolicity and were 
able to show for $N\leq \maxN$ that it is equivalent to the 
existence of a direct first order reduction which is symmetric 
hyperbolic in the standard first order sense. For higher orders 
we were not successful in showing equivalence, but only one 
direction, that the existence of a symmetric hyperbolic first 
order reduction implies FT$N$S symmetric hyperbolicity.

There are various questions which can be addressed in further
analysis. One is that the proofs about strong hyperbolicity
rely strongly on three spatial dimensions, because the
Levi-Civita symbol $\varepsilon_{ijk}$ is used. Whether
a similar construction is possible for other spatial
dimensionality is not known. For symmetric hyperbolicity the
spatial dimensionality is not used in the calculations, i.e.
the results apply to any dimension. However, as mentioned above,
equivalence for~$N>\maxN$ is not yet shown.

Finally, it is essential for the construction of approximate
solutions to identify good numerical methods. Therefore
it is also of interest to analyze the connection between high
order hyperbolicity and e.g. stability of finite difference
methods.

\section*{Acknowledgments}

The authors wish to thank Carsten Gundlach and Milton Ruiz for 
stimulating discussions. This work was supported in part by 
DFG grant SFB/Transregio~7 ``Gravitational Wave Astronomy''. 

\bibliography{refs}{}

\begin{thebibliography}{10}

\bibitem{Agr72}
M.~S. Agranovich.
\newblock Theorem on matrices depending on parameters and its applications to
  hyperbolic systems.
\newblock {\em Functional Analysis and Its Applications}, 6:85--93, 1972.

\bibitem{Bei04}
R. Beig.
\newblock {Concepts of hyperbolicity and relativistic continuum mechanics}.
\newblock {\em Lect.Notes Phys.}, 692:101--116, 2006.

\bibitem{BeySar04}
H. Beyer and O. Sarbach.
\newblock {O}n the well posedness of the {B}aumgarte-{S}hapiro-
  {S}hibata-{N}akamura formulation of {E}instein's field equations.
\newblock {\em Phys. Rev. D}, 70:104004, 2004.

\bibitem{Bey05}
H.~R. Beyer.
\newblock {Beyond partial differential equations}.
\newblock LNM 1898, 2005.
\newblock  Springer, Berlin, 2007

\bibitem{CalHinHus05}
G. Calabrese, I. Hinder, and S. Husa.
\newblock Numerical stability for finite difference approximations of
  {E}instein's equations.
\newblock {\em J. Comp. Phys.}, 218:607--634, 2005.

\bibitem{Chr00}
D. Christodolou.
\newblock {\em The Action Principle and Partial Differential Equations.}
\newblock Annals of Mathematics Studies, 146. Princeton University Press, 2000.

\bibitem{Ger96}
R.~P. Geroch.
\newblock Partial differential equations of physics.
\newblock In {\em General Relativity}. G.~Hall, editor. 1996.

\bibitem{GunGar04a}
C.~Gundlach and J.~M. Mart\'in-Garc\'ia.
\newblock Symmetric hyperbolicity and consistent boundary conditions for
  second-order {E}instein equations.
\newblock {\em Phys. Rev. D}, 70:044032, 2004.

\bibitem{GunGar04}
C.~Gundlach and J.M. Martin-Garcia.
\newblock Symmetric hyperbolic form of systems of second-order evolution
  equations subject to constraints.
\newblock {\em Phys. Rev. D}, 70:044031, 2004.

\bibitem{GunGar05}
C. Gundlach and J.M. Mart{\'\i}n-Garc{\'\i}a.
\newblock Hyperbolicity of second-order in space systems of evolution
  equations.
\newblock {\em Class. Quantum Grav.}, 23:S387--S404, 2006.

\bibitem{GunGar06}
C. Gundlach and J.M. Martin-Garcia.
\newblock Well-posedness of formulations of the {E}instein equations with
  dynamical lapse and shift conditions.
\newblock {\em Phys. Rev. D}, 74:024016, 2006.

\bibitem{Gus08}
B. Gustafsson.
\newblock {\em High Order Difference Methods for Time Dependent PDE}.
\newblock Springer-Verlag, Berlin, Heidelberg, 2008.

\bibitem{GusKreOli95}
B. Gustafsson, H.-O. Kreiss, and J. Oliger.
\newblock {\em Time dependent problems and difference methods}.
\newblock Wiley, New York, 1995.

\bibitem{KidSchTeu01}
L.~E. Kidder, M.~A. Scheel, and S.~A. Teukolsky.
\newblock Extending the lifetime of 3{D} black hole computations with a new
  hyperbolic system of evolution equations.
\newblock {\em Phys. Rev. D}, 64:064017, 2001.

\bibitem{KreOrtPet10}
H.-O. {Kreiss}, O.~E. {Ortiz}, and N.~A. {Petersson}.
\newblock {Initial-boundary value problems for second order systems of partial
  differential equations}.
\newblock arXiv:1012.1065, December 2010.

\bibitem{Kre70}
H.-O. Kreiss.
\newblock Initial boundary value problems for hyperbolic systems.
\newblock {\em Comm. Pure Appl. Math.}, 23:277--298, 1970.

\bibitem{KreLor89}
H.-O. Kreiss and J.~Lorenz.
\newblock {\em Initial-boundary value problems and the {N}avier-{S}tokes
  equations}.
\newblock Academic Press, New York, 1989.

\bibitem{KrePetYst02}
H.-O. Kreiss, N.~A. Petersson, and Jacob Ystr{\"o}m.
\newblock Difference approximations for the second order wave equation.
\newblock {\em SIAM J. Numer. Anal.}, 40:1940--1967, 2002.

\bibitem{LinSchKid05}
L. Lindblom, M.~A. Scheel, L.~E. Kidder, R. Owen, and O. Rinne.
\newblock A new generalized harmonic evolution system.
\newblock {\em Class. Quantum Grav.}, 23:S447--S462, 2006.

\bibitem{Met00} 
G. M\'etivier.
\newblock The block structure condition for symmetric hyperbolic systems.
\newblock {\em Bulletin of the London Mathematical Society}, 32:689--702, 2000.

\bibitem{NagOrtReu04}
G.~Nagy, O.~E. Ortiz, and O.~A. Reula.
\newblock Strongly hyperbolic second order {E}instein's evolution equations.
\newblock {\em Phys. Rev. D}, 70:044012, 2004.

\bibitem{SarCalPul02}
O.~Sarbach, G.~Calabrese, J.~Pullin, and M.~Tiglio.
\newblock Hyperbolicity of the {BSSN} system of {E}instein evolution equations.
\newblock {\em Phys. Rev. D}, 66:064002, 2002.

\bibitem{SarTig12}
O. Sarbach and M. Tiglio.
\newblock Continuum and discrete initial-boundary value problems and einstein's
  field equations.
\newblock {\em Living Reviews in Relativity}, 15(9), 2012.

\bibitem{Tay81}
M.~E. Taylor.
\newblock {\em Pseudodifferential operators / Michael E. Taylor}.
\newblock Princeton University Press, Princeton, N.J., 1981.

\bibitem{Wal84a}
R.~M. Wald.
\newblock {\em General Relativity}.
\newblock University of Chicago Press, Chicago, 1984.

\end{thebibliography}
\bibliographystyle{plain}


\end{document}